\theoremstyle{plain}
\newtheorem{theorem}{Theorem}
\newtheorem{corollary}[theorem]{Corollary}
\newtheorem{lemma}[theorem]{Lemma}
\newtheorem{proposition}[theorem]{Proposition}
\theoremstyle{definition}
\newtheorem{remark}[theorem]{Remark}
\newtheorem*{remark*}{Remark}
\newcommand{\pr}{\mathbf P}
\newcommand{\e}{\mathbf E}
\begin{document}
\title[Crossing of a curve boundary by a random walk]
{Exact asymptotics for the instant of crossing a curve boundary by an asymptotically
stable random walk}
\author[Denisov]{Denis Denisov} 
\address{School of Mathematics, University of Manchester, UK}
\email{denis.denisov@manchester.ac.uk}

\author[Wachtel]{Vitali Wachtel} \address{Mathematical Institute,
  University of Munich, Theresienstrasse 39, D--80333 Munich, Germany}
\email{wachtel@mathematik.uni-muenchen.de}

\begin{abstract}
Suppose that $\{S_n,\ n\geq0\}$ is an asymptotically stable random walk.
Let $g$ be a positive function and $T_g$ be the first time when $S_n$
leaves $[-g(n),\infty)$. In this paper we study asymptotic behaviour of
$T_g$. We provide integral tests for function $g$ that guarantee
$\pr(T_g>n)\sim V(g)\pr(T_0>n)$ where $T_0$ is the first strict descending
ladder epoch of $\{S_n\}$.
\end{abstract}

\maketitle

\section{Introduction and main results }

Consider a one-dimensional random walk  
$$
S_0=0,\ S_n=X_1+\cdots+X_n,\,n\geq1,
$$ 
where $X,X_1,X_2,\ldots$ are i.i.d. random variables. Let $g(t)$ be an
increasing function and consider the exit time from the domain bounded
by $-g(t)$, that is,
$$
T_g:=\min\{n\ge 1: S_n< -g(n)\}.
$$
The aim of this paper is to study the asymptotics $\pr(T_g>n)$
as $n$ goes to infinity.

This question has been thoroughly good studied in the case of constant boundary,
that is, $g(n)\equiv x\geq0$. In particular, it is well-known that if
\begin{align}
\pr(S_n\ge 0)\to \rho\in(0,1),\quad n\to\infty\label{eq1}
\end{align}
then
\begin{align}
\pr(T_x>n) \sim h(x)n^{\rho-1}L(n),\quad n\to \infty\label{eq2},
\end{align}
where $L(n)$ is a slowly varying function. (Slightly abusing  notation,
we write $T_x$ for $T_g$ in the case when $g(t)\equiv x$. We 
also write  $a(x)\sim b(x)$ if $a(x)/b(x)\to 1$ as $x$ to infinity.) 
Function $h(x)$ is the renewal function of the strict decreasing ladder
height process.

We now introduce a class of random walks, which will be considered in
the present paper. Let
\begin{equation*}
\mathcal{A}:=\{0<\alpha <1;\,|\beta |<1\}\cup \{1<\alpha <2;|\beta |\leq
1\}\cup \{\alpha =1,\beta =0\}\cup \{\alpha =2,\beta =0\}
\end{equation*}%
be a subset in $\mathbb{R}^{2}.$ For $(\alpha ,\beta )\in \mathcal{A}$ and a
random variable $X$ write $X\in \mathcal{D}\left( \alpha ,\beta \right) $ if
the distribution of $X$ belongs to the domain of attraction of a stable law
with characteristic function%
\begin{equation}
G_{\alpha ,\beta }\mathbb{(}t\mathbb{)}:=\exp \left\{ -c|t|^{\,\alpha
}\left( 1-i\beta \frac{t}{|t|}\tan \frac{\pi \alpha }{2}\right) \right\},
\ c>0,  \label{std}
\end{equation}%
and, in addition, $\mathbf{E}X=0$ if this moment exists. Let 
$c(x) $ be a positive function specified by the relation%
\begin{equation}
c(x):=\inf \left\{ u\geq 0:\mu (u)\leq x^{-1}\right\},\ x\geq1 ,  \label{Defa}
\end{equation}%
where
\begin{equation*}
\mu (u):=\frac{1}{u^{2}}\int_{-u}^{u}x^{2}\mathbf{P}(X\in dx).
\end{equation*}%
It is known (see, for instance, \cite[Ch. XVII, \S 5]{FE}) that for every
$X\in \mathcal{D}(\alpha ,\beta )$ the function $\mu (u)$ is regularly
varying with index $(-\alpha )$. This implies that $c(x)$ is regularly
varying with index $\alpha ^{-1}$, i.e., there exists a function $l_{1}(x)$, 
slowly varying at infinity, such that
\begin{equation}
c(x)=x^{1/\alpha }l_{1}(x).  \label{asyma}
\end{equation}%
In addition, the scaled sequence $\left\{ S_{n}/c(n),\,n\geq 1\right\} $
converges in distribution to the stable law given by (\ref{std}). In this
case we say that $S_n$ is an \emph{asymptotically stable random walk}.
For every $X\in \mathcal{D}\left( \alpha ,\beta \right)$
there is an explicit formula for $\rho$,
\begin{equation}
\displaystyle\rho =\left\{
\begin{array}{ll}
\frac{1}{2},\ \alpha =1, &  \\
\frac{1}{2}+\frac{1}{\pi \alpha }\arctan \left( \beta \tan \frac{\pi \alpha
}{2}\right) ,\text{ otherwise}. &
\end{array}%
\right.  \label{ro}
\end{equation}

Since $h(0)=1$, one can rewrite \eqref{eq2} in a slightly different way:
$$
\pr(T_x>n)\sim h(x)\pr(T_0>n).
$$
This representation shows that the asymptotics is the same up to a constant
for every fixed $x$. This statement remains valid for curved boundaries which
grow not very fast. For the Brownian motion and monotone increasing function
$g$ it was shown in \cite{Nov81} and \cite{U80} that 
\begin{equation}\label{eq3}
0<\e[|B(T^{(bm)}_g)|]<\infty \iff \int_1^\infty g(t)t^{-3/2}dt <\infty 
\end{equation}
and, moreover,
\begin{equation}\label{eq3a}
\pr(T^{(bm)}_g>t)\sim \e[B(T^{(bm)}_g)]\pr(T^{(bm)}_1>t)
\text{ as }t\to\infty.
\end{equation}
Finiteness of the integral in \eqref{eq3} is also necessary: if
$\int_1^\infty g(t)t^{-3/2}dt=\infty$ then 
$$
\pr\{T^{(bm)}_g>t\}\gg\pr\{T^{(bm)}_1>t\}\quad\text{as }t\to\infty.
$$

The case of general random walks and Levy processes was analysed by Greenwood
and Novikov in \cite{GN}, see also Novikov \cite{Nov81,Nov82}. In Theorem~1
they state that if $\e g(T_0)<\infty$  then there exists $R_g\in (0,\infty)$
such that
\begin{equation}
\label{GN_thm1}
\pr(T_g>n) \sim R_g \pr(T_0>n)\quad\text{as }n\to\infty.
\end{equation}
However, recent results by Aurzada and Kramm \cite{AK}  give strong
grounds to suspect that conditions in \cite{GN} are not optimal. More precisely,
it is shown in \cite{AK} that if $g(t)=o(t^\gamma)$ with some $\gamma<1/\alpha$
then $\mathbf{P}(T_g>n)$ and $\mathbf{P}(T_0>n)$ have the same rough
asymptotics, that is,
$$
\mathbf{P}(T_g>n)=n^{\rho-1+o(1)}\quad\text{ as }n\to\infty.
$$
But, in view of \eqref{eq2}, $\e g(\tau_0)$ is finite only for
$\gamma\leq 1-\rho<1/\alpha$.

In this paper we present alternative (and milder than $\mathbf{E}g(T_0)<\infty$)
conditions for the validity of \eqref{GN_thm1}.

\begin{theorem}\label{T1}
Let $g$ be an increasing function such that $h(g(x))$ is subadditive. 
If \eqref{eq1} holds and
\begin{equation}
\label{hg-moment}
\mathbf{E}h(g(T_0))<\infty
\end{equation}
then there exists a constant $V(g)\in (0,\infty)$ such that 
\begin{equation}
\label{T1.1}
\pr(T_g>n)\sim V(g)\pr (T_0>n) 
\sim V(g)n^{\rho-1}L(n),\quad n\to \infty.
\end{equation}
\end{theorem}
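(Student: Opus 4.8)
The plan is to identify the constant explicitly as $V(g)=\lim_{n\to\infty}v_n$, where
\[
v_n:=\mathbf{E}\bigl[h(S_n+g(n));\,T_g>n\bigr],
\]
to show that this limit exists in $(0,\infty)$, and then to squeeze $\mathbf{P}(T_g>n)$ between $v_m\mathbf{P}(T_0>n)(1+o(1))$ from below (for each fixed $m$) and $V(g)\mathbf{P}(T_0>n)(1+o(1))$ from above. Write $f_n:=\mathbf{P}(T_0>n)$. I shall use three standard facts: (i) $h$ is harmonic for the walk killed below $0$, i.e.\ $\mathbf{E}[h(x+X);x+X\ge0]=h(x)$ for $x\ge0$, so $h(S_n)\mathbf{1}\{T_0>n\}$ is a martingale and $\mathbf{E}_x[h(S_n);T_0>n]=h(x)$; (ii) the local limit \eqref{eq2} with a starting point, $\mathbf{P}_x(T_0>n)\sim h(x)f_n$, locally uniformly in $x$ and even uniformly for $x=o(c(n))$, together with the global bound $\mathbf{P}_x(T_0>n)\le C h(x)f_n$; (iii) $f$ is regularly varying of index $\rho-1$ and $h$ is nondecreasing, subadditive and regularly varying of positive index. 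I also use an a~priori estimate $\mathbf{P}(T_g>n)\le C f_n$, obtained by induction on $n$ from decomposing $\{T_g>n\}$ at the first descending ladder epoch and applying subadditivity of $h\circ g$ and fact (ii).

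First I would show $v_n$ is nondecreasing: on $\{T_g>n\}$ one has $S_n+g(n)\ge0$, and since $g$ is increasing, $\{S_{n+1}\ge-g(n+1)\}\supseteq\{S_n+g(n)+X_{n+1}\ge0\}$; monotonicity of $h,g$ and fact (i) give $v_{n+1}\ge\mathbf{E}[h(S_{n+1}+g(n))\mathbf{1}\{T_g>n,\,S_{n+1}\ge-g(n)\}]=v_n$, so $V(g):=\lim v_n\in[v_1,\infty]$ with $v_1>0$. The crux is $V(g)<\infty$. Conditioning on $\mathcal{F}_k$ and using $\mathbf{1}\{S_j\ge-g(j)\}\le\mathbf{1}\{S_j\ge-g(n)\}$ for $j\le n$ (because $g$ is increasing), the event $\{T_g>n\}$ lies in $\{T_g>k\}\cap\{S_j\ge-g(n),\,k<j\le n\}$; applying fact (i) to the walk $(S_{k+i}+g(n))_i$ gives $v_n\le\mathbf{E}[h(S_k+g(n));T_g>k]$ for every $k\le n$. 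Subadditivity of $h$, namely $h(S_k+g(n))\le h(S_k^+)+h(g(n))$, together with $\mathbf{E}[h(S_k^+);T_g>k]\le v_k$, then yields
\[
v_n\le v_k+h(g(n))\,\mathbf{P}(T_g>k),\qquad 0\le k\le n.
\]
Taking $k=\lfloor n/2\rfloor$ and iterating, and inserting the a~priori bound, gives $v_n\le v_{n_0}+C\sum_{j}h(g(n_j))f_{n_j}$ along a dyadic sequence $n=n_0>n_1>\cdots>n_0$ fixed; since $k\mapsto h(g(k))$ is nondecreasing and $f$ is regularly varying, a Cauchy‑condensation comparison bounds this by $C'\sum_k h(g(k))\mathbf{P}(T_0=k)=C'\mathbf{E}h(g(T_0))<\infty$. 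Hence $\sup_n v_n=V(g)\in(0,\infty)$.

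For the lower bound, fix $m$; replacing $-g(\cdot)$ by the higher constant $-g(m)$ on $[m,n]$ (which only shrinks the event, $g$ being increasing) gives $\mathbf{P}(T_g>n)\ge\mathbf{E}[\mathbf{P}_{S_m+g(m)}(T_0>n-m);T_g>m]$; dividing by $f_n$, using $\mathbf{P}_x(T_0>N)\sim h(x)f_N$ with the domination in (ii), $f_{n-m}\sim f_n$, and dominated convergence (note $\mathbf{E}h(S_m+g(m))<\infty$) yields $\liminf_n \mathbf{P}(T_g>n)/f_n\ge v_m$, hence $\ge V(g)$ as $m\to\infty$. For the upper bound, choose $m_n\to\infty$ with $m_n=o(n)$ and $h(g(n))f_{m_n}\to0$ — possible because convergence of $\sum_k h(g(k))\mathbf{P}(T_0=k)$ forces $h(g(n))f_n\to0$ with enough slack. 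From $\{T_g>n\}\subseteq\{T_g>m_n\}\cap\{S_j\ge-g(n),\,m_n<j\le n\}$ we get $\mathbf{P}(T_g>n)\le\mathbf{E}[\mathbf{P}_{S_{m_n}+g(n)}(T_0>n-m_n);T_g>m_n]$. On the bulk event $\{S_{m_n}\le\varepsilon c(n)\}$, the uniform form of (ii) and $h(S_{m_n}+g(n))\le h(S_{m_n}^+)+h(g(n))$ bound the contribution by $(1+o(1))f_n\bigl(v_{m_n}+h(g(n))\mathbf{P}(T_g>m_n)\bigr)=(1+o(1))(V(g)+o(1))f_n$; the complementary event is a large deviation for $S_{m_n}$ (since $c(m_n)=o(c(n))$) and contributes $o(f_n)$ via the crude bound in (ii) and tail estimates for $S_{m_n}$. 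Letting $\varepsilon\to0$ gives $\limsup_n \mathbf{P}(T_g>n)/f_n\le V(g)$, and with the lower bound the theorem follows.

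The main obstacle is $V(g)<\infty$. The naive increment estimate $v_n-v_{n-1}\le h(g(n)-g(n-1))\mathbf{P}(T_g>n-1)$ is too lossy — it already diverges for linear $g$ — so one must instead route through the coarse recursion $v_n\le v_{\lfloor n/2\rfloor}+h(g(n))\mathbf{P}(T_g>\lfloor n/2\rfloor)$, which wastes nothing beyond a factor $2$ in the time variable and whose geometric structure matches the dyadic (condensed) form of $\mathbf{E}h(g(T_0))<\infty$; here the subadditivity of $h\circ g$ is what guarantees these reformulations stay equivalent, and it is also what drives the a~priori bound $\mathbf{P}(T_g>n)\le C f_n$ (via a shifted‑curve recursion, $h(g(s+k))\le h(g(s))+h(g(k))$). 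Securing that a~priori bound and the uniform‑in‑$x$ refinement of \eqref{eq2} is the remaining technical burden; the lower bound and the monotonicity of $v_n$ are routine.
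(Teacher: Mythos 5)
Your recursion $v_n\le v_k+h(g(n))\,\mathbf{P}(T_g>k)$ is correct (it only uses monotonicity of $g$, harmonicity and subadditivity of $h$), and the dyadic iteration plus condensation against $\mathbf{E}h(g(T_0))<\infty$ would indeed give $\sup_n v_n<\infty$ \emph{provided} the a priori estimate $\mathbf{P}(T_g>n)\le C\,\mathbf{P}(T_0>n)$ is available. But that estimate is not an innocuous input: up to the constant it \emph{is} the upper-bound half of the theorem, and your one-line justification (``induction on $n$, decomposing at the first descending ladder epoch, using subadditivity of $h\circ g$'') does not work as stated. After the first ladder epoch $(T_0,-S_{T_0})=(k,y)$ the remaining walk must stay above the \emph{shifted and lowered} curve $s\mapsto g(k+s)-y$, which is a different boundary from $g$, so the induction hypothesis does not apply to it; to close such an induction you would need a bound uniform over this whole family of curves, and you give no indication how the constant is kept uniform. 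This is exactly where the real work of the paper lies: the authors use the ladder representation \eqref{representation}, invoke the Greenwood--Monroe theorem to get $\mathbf{P}(T_g>n)/\mathbf{P}(T_0>n)\to\mathbf{E}\nu$ under \eqref{eq1} alone, and then prove $\mathbf{E}\nu<\infty$ — via a Foster--Lyapunov drift argument combined with Erickson's theorem when $\mathbf{E}\chi=\infty$, and an LLN-type argument when $\mathbf{E}\chi<\infty$. Those are the steps where \eqref{hg-moment} and the subadditivity of $h\circ g$ genuinely enter, and they have no counterpart in your write-up.

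There is a second, independent problem: your sharp upper bound relies on the uniform refinement of \eqref{eq2}, namely $\mathbf{P}_x(T_0>n)\sim h(x)\mathbf{P}(T_0>n)$ uniformly for $x=o(c(n))$ (Doney's result, the paper's Lemma~\ref{lem2}), and on tail estimates for $S_{m_n}$ at scale $c(n)$. These require $X\in\mathcal{D}(\alpha,\beta)$; under the sole hypothesis \eqref{eq1} of Theorem~\ref{T1} the norming function $c(n)$ need not even exist, so your fact (ii) in its uniform form is unavailable and your upper-bound argument does not cover the stated generality. (The crude Wald-type bound $\mathbf{P}_x(T_0>n)\le Ch(x)\mathbf{P}(T_0>n)$ and the fixed-$x$ asymptotics do hold under \eqref{eq1}, so your lower bound via dominated convergence is fine.) In effect your scheme is the harmonic-function/submartingale route the paper uses for Theorem~\ref{T2}, not for Theorem~\ref{T1}; note also that, were your argument complete, it would prove the stable-case result without subadditivity of $h\circ g$ and without the logarithmic loss in \eqref{g-condition2}, i.e.\ something strictly stronger than what the paper achieves — a further sign that the unproved a priori bound and the uniformity input are carrying the essential difficulty.
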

This result generalises Theorem 1 in Greenwood and Novikov \cite{GN}
where it was assumed that that $g$ is concave and that $\mathbf{E} g(T_0)$
is finite. Noting that $h$ is subadditive, we conclude that
\eqref{hg-moment} is weaker than the finiteness of $\mathbf{E}g(T_0)$.
Moreover, subadditivity of $h$ implies also that $h(g(x))$ is subadditive
for every concave function $g$. And the subadditivity assumption can be
further weakened:
\begin{remark}
The statement of Theorem \ref{T1} remains valid if we replace
subadditivity of $h(g(x))$ by the existence of a subadditive majorant
which satisfy \eqref{hg-moment}.
\hfill$\diamond$
\end{remark}
We next specialise Theorem \ref{T1} to the case of asymptotically stable
random walks.
\begin{corollary}
\label{C1}
Assume that $X\in\mathcal{D}(\alpha,\beta)$. If
\begin{equation}
\label{g-condition}
\int_1^\infty\frac{h(g(x))}{x h(c(x))}dx<\infty
\end{equation}
then \eqref{T1.1} is valid.
\end{corollary}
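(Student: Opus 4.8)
The plan is to derive, from $X\in\mathcal D(\alpha,\beta)$ together with \eqref{g-condition}, all the hypotheses needed to apply Theorem~\ref{T1} in the form permitted by the Remark that follows it. To begin, \eqref{eq1} is automatic: since $X\in\mathcal D(\alpha,\beta)$ the scaled walk $S_n/c(n)$ converges in distribution to the law with characteristic function $G_{\alpha,\beta}$, so $\pr(S_n\ge0)$ tends to the mass that law assigns to $[0,\infty)$, which is exactly the $\rho$ of \eqref{ro}; and $(\alpha,\beta)\in\mathcal A$ forces $\rho\in(0,1)$.

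Next I would show that \eqref{g-condition} is equivalent to the moment condition \eqref{hg-moment}. The only input from fluctuation theory needed here is that, for an asymptotically stable walk, $h$ is regularly varying of index $\alpha(1-\rho)$ and that $\pr(T_0>n)$ and $1/h(c(n))$ are of the same order — both, up to a slowly varying factor, of order $n^{\rho-1}$, which is \eqref{eq2} with $x=0$ read together with \eqref{asyma}. Since $g$, hence $h\circ g$, is nondecreasing, Abel summation gives
\[
\mathbf Eh(g(T_0))=h(g(1))+\sum_{n\ge2}\bigl(h(g(n))-h(g(n-1))\bigr)\pr(T_0\ge n),
\]
and a second summation by parts, using that $1/h(c(n))-1/h(c(n+1))$ is of order $1/(n\,h(c(n)))$ by the regular variation of $h\circ c$, identifies the right-hand side, up to two-sided constants, with $\int_1^\infty\frac{h(g(x))}{x\,h(c(x))}\,dx$; the boundary term $h(g(N))/h(c(N))$ is bounded by $\int_N^{2N}\frac{h(g(x))}{x\,h(c(x))}\,dx$ and so vanishes once the integral converges. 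Hence \eqref{g-condition} is equivalent to \eqref{hg-moment}.

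It remains to supply the subadditivity that Theorem~\ref{T1} requires but that an arbitrary increasing $g$ need not provide; here I would invoke the Remark, which only asks for a subadditive majorant of $h(g(\cdot))$ with finite expectation at $T_0$. Note that \eqref{g-condition}, together with the monotonicity of $h\circ g$ and the equivalence just established, forces $h(g(x))=o(h(c(x)))$ and hence $h(g(x))=o(x^{c})$ for some $c<1$; therefore the least concave majorant $f$ of $h\circ g$ (extended by its value at $1$ on $[0,1]$) is finite, nondecreasing, concave and nonnegative, hence subadditive, and of course $f\ge h\circ g$. One is then left with the inequality $\int_1^\infty\frac{f(x)}{x\,h(c(x))}\,dx<\infty$, and I expect \emph{this} to be the main obstacle. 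It is a purely deterministic statement: the least concave majorant of a nondecreasing $\psi$ with $\int_1^\infty\psi\,w<\infty$, where $w(x)=1/(x\,h(c(x)))$ is the regularly varying, integrable weight, still has finite integral against $w$. I would prove it by splitting $[1,\infty)$ into the ``contact'' intervals, where $f=\psi$, and the complementary ``gap'' intervals: on a gap $(u,v)$ the function $f$ is the chord through $(u,\psi(u))$ and $(v,\psi(v))$, so $f(x)\le\psi(v)$, and in fact $f(x)\le\psi(v)\,x/(v-u)+\psi(u)$, there; bounding $\int_u^v f\,w$ via these two terms and comparing, using the regular variation of $h\circ c$, with $\int_v^{2v}\psi\,w$ and $\int_u^{2u}\psi\,w$ — both legitimate because $\psi$ is nondecreasing, so $\psi\ge\psi(v)$ beyond $v$ — shows that each gap costs at most a bounded multiple of the mass of $\psi\,w$ over a comparable interval; summing over the disjoint gaps, with attention to the bounded overlap multiplicity of the comparison intervals, yields the bound.

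Once \eqref{eq1}, the existence of the subadditive majorant $f$, and $\mathbf Ef(T_0)<\infty$ are in hand, the Remark applies and delivers \eqref{T1.1}. The first two steps are routine modulo citing the relation $\pr(T_0>n)\asymp 1/h(c(n))$; the one point requiring genuine care is the deterministic estimate for the least concave majorant against the weight $1/(x\,h(c(x)))$, and in particular the bookkeeping of the slowly varying factors that enter the comparisons.
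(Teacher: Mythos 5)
The paper's own proof of Corollary~\ref{C1} is much shorter than what you attempt: it reads the corollary as a specialisation of Theorem~\ref{T1} (so the subadditivity of $h(g(\cdot))$, or of a subadditive majorant, remains a standing hypothesis --- recall it is automatic for concave $g$), and all it verifies is that \eqref{g-condition} is equivalent to \eqref{hg-moment}, using the fluctuation-theoretic relation $\mathbf{P}(T_0>x)\sim C/h(c(x))$. Your Abel-summation argument for this equivalence is fine in substance, but the justification you give for its key input is not: the comparability $\mathbf{P}(T_0>n)\asymp 1/h(c(n))$ does \emph{not} follow from \eqref{eq2} together with \eqref{asyma} --- those only say that both quantities are regularly varying with the same index $\rho-1$ and say nothing about the two slowly varying factors, whereas your summation by parts genuinely needs two-sided comparability, not equality of indices. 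The relation is a separate classical fact (precisely the statement $\mathbf{P}(\tau_1>x)\sim C/h(c(x))$ that the paper quotes); since you also describe it at the end as a relation to be cited, this is a misattribution rather than a fatal error, but as written the step is unsupported.

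The second half of your proposal --- replacing $h\circ g$ by its least concave majorant $f$ so as to invoke the Remark and dispense with subadditivity --- goes beyond what the paper proves, and this is where the genuine gap sits. The deterministic lemma you need (finiteness of $\int f\,w$ with $w(x)=1/(x\,h(c(x)))$, given $\int (h\circ g)\,w<\infty$) is true, but the bookkeeping you sketch does not close: the gaps of the majorant are disjoint, yet the comparison intervals $[u,2u]$, $[v,2v]$ attached to a gap $(u,v)$ can have \emph{unbounded} overlap multiplicity (infinitely many gaps may have endpoints inside one dyadic block), so summing the per-gap bounds as described can yield a divergent majorant of a finite quantity; the ``bounded overlap multiplicity'' assertion is false in general. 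A clean repair uses that the least concave majorant is subadditive with respect to increments of $\psi:=h\circ g$: writing $\psi(x)=\psi(1)+\int_{(1,\infty)}\mathbf{1}\{u\le x\}\,d\psi(u)$ and majorising each indicator by its chord $\min\{1,(x-1)/(u-1)\}$ gives $f(x)\le\psi(1)+\int\min\{1,(x-1)/(u-1)\}\,d\psi(u)$; by Karamata, $\int_1^\infty\min\{1,(x-1)/(u-1)\}\,w(x)\,dx\le C/h(c(u))$, and Fubini plus one more integration by parts reduces everything to \eqref{g-condition}. Two smaller points: extending $f$ to $[0,1]$ by its value at $1$ can destroy concavity at the junction and with it subadditivity (one needs, e.g., $f(2)\le 2f(1)$), so take the concave majorant on $[0,\infty)$ directly --- it is finite since $h(g(x))=o(h(c(x)))$ and $h(c(\cdot))$ has index $1-\rho<1$; and if the corollary is read, as the paper reads it, with Theorem~\ref{T1}'s hypotheses on $g$ intact, this entire layer is unnecessary.
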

If $\mathbf{E}[(X^-)^2]<\infty$ then, clearly, $h(x)\sim cx$ and,
consequently, our condition \eqref{g-condition} coincides with
Novikov's integral test, see Theorem 1 in \cite{Nov82}. Note also
that Novikov \cite{Nov82} imposes a stronger assumption on $g$:
this function is assumed to be concave.

Recalling that $h$ is regularly varying of index $-\alpha(1-\rho)$,
we conclude that \eqref{T1.1} holds for every function $g$ bounded from
above by $x^\gamma$ with some $\gamma<\min\{1,1/\alpha\}$. But if $g(x)$ is
asymptotically equivalent to $c(x)$ then, applying the functional
limit theorem, we conclude that the statement of Corollary~\ref{C1}
is not valid. Therefore, condition \eqref{g-condition} is quite
close to the optimal one for asymptotically stable random walks.
\begin{remark}
Mogulskii and Pecherskii \cite{MP79} have derived the following
factorisation identity for $T_g$: If $g$ is superadditive, i.e.,
$g(x+y)\geq g(x)+g(y)$, then there
exists a sequence of events $E_n$ such that
\begin{equation}
\label{WH1}
\sum_{n=0}^\infty z^n\mathbf{P}(T_g>n)=
\exp\left\{\sum_{n=1}^\infty\frac{z^n}{n}\mathbf{P}(E_n)\right\}.
\end{equation}
Moreover, 
\begin{equation}
\label{WH2}
E_n\subseteq\{S_n\geq -g(n)\}\quad \text{ for all }n\geq1.
\end{equation}
Unfortunately, the structure of $E_n$ is highly non-trivial and
it is not clear how do determine the limit of $\mathbf{P}(E_n)$.
But using \eqref{WH2} one can obtain an asymptotically precise
upper bound for $\mathbf{P}(T_g>n)$. Indeed, it is immediate
from \eqref{WH2} that
$$
\mathbf{P}(T_g>n)\leq q_n,
$$
where $q_n$ is determined  by
$$
\sum_{n=0}^\infty z^nq_n=
\exp\left\{\sum_{n=1}^\infty\frac{z^n}{n}\mathbf{P}(S_n\geq -g(n))\right\}.
$$
Assume that function $g$ satisfies 
\begin{equation}
\label{new-test}
\int_1^\infty\frac{g(x)}{xc(x)}dx<\infty.
\end{equation}
Then applying the estimate
$$
\mathbf{P}(S_n\in[x,x+1))\leq\frac{C}{c(n)},
$$
we conclude that coefficients of
$$
R(z):=\exp\left\{\sum_{n=1}^\infty\frac{z^n}{n}\mathbf{P}(S_n\in[-g(n),0]\right\}
$$
are summable, i.e., $R(1)<\infty$. Noting now that 
$$
\sum_{n=0}^\infty z^nq_n=\left(\sum_{n=0}^\infty z^n\mathbf{P}(T_0>n)\right)R(z),
$$
we arrive at the relation
$$
q_n\sim R(1)\mathbf{P}(T_0>n)
$$
and, consequently,
\begin{equation}
\label{MP-bounds}
1\leq\liminf_{n\to\infty}\frac{\mathbf{P}(T_g>n)}{\mathbf{P}(T_0>n)}\leq
\liminf_{n\to\infty}\frac{\mathbf{P}(T_g>n)}{\mathbf{P}(T_0>n)}\leq R(1).
\end{equation}
Note also that in order to obtain the relation 
$\mathbf{P}(T_g>n)\sim C \mathbf{P}(T_0>n)$
from \eqref{WH1} it suffices to show that
$$
\sum_{n=1}^\infty\frac{1}{n}\left|\mathbf{P}(E_n)-\mathbf{P}(S_n>0)\right|<\infty.
$$

Recalling that $h$ is regularly varying of index $\alpha(1-\rho)$, we conclude
that \eqref{g-condition} is more restrictive than \eqref{new-test} for all
random walks with $\alpha(1-\rho)<1$. Thus, bounds \eqref{MP-bounds} imply that
$\mathbf{P}(T_g>n)$ can be of the same order as $\mathbf{P}(T_0>n)$ for
functions $g$ which do not satisfy \eqref{g-condition}.
\hfill$\diamond$
\end{remark}

The starting point of the proof of Theorem \ref{T1} is the following simple
observation: $T_g$ coincides with one of strict descending ladder epochs of $S_n$.
Let $(\tau_k,\chi_k)$ be independent copies of $(T_0,-S_{T_0})$. Then
\begin{equation}
\label{representation}
T_g=\sum_{k=1}^\nu \tau_k,
\end{equation}
where
\begin{equation}
\label{nu-def}
\nu:=\min\{k\ge 1: \chi_1+\cdots+\chi_k>g(\tau_1+\cdots+\tau_k)\}.
\end{equation}
Since the tail distribution function of $\tau$'s is regularly varying with index
$\rho-1\in(-1,0)$, we prove that, for any increasing function $g$,
\begin{equation}
\label{general}
\lim_{n\to\infty}\frac{\mathbf{P}(T_g>n)}{\mathbf{P}(T_0>n)}=\mathbf{E}\nu\in[1,\infty].
\end{equation}
Thus, to prove Theorem~\ref{T1} it suffices to show that $\mathbf{E}\nu$
is finite under the conditions of Theorem~\ref{T1}.

\vspace{6pt}

The subadditivity assumption in Theorem \ref{T1} seems to be purely
technical. Unfortunately, we do not know how to remove this
restriction. But for asymptotically stable random walks one can
replace \eqref{g-condition} by a stronger integral test which allows
to derive \eqref{T1.1} without subadditivity assumption.
\begin{theorem}
\label{T2}
Assume that $X\in\mathcal{D}(\alpha,\beta)$. If $g$ is increasing and
\begin{equation}
\label{g-condition2}
\int_1^\infty\frac{h(g(x))}{x h(c(x/\log x))}dx<\infty,
\end{equation}
then \eqref{T1.1} remains valid.
\end{theorem}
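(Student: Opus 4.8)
By relation \eqref{general}, which holds for every increasing $g$, it suffices to show that $\e\nu<\infty$ under assumption \eqref{g-condition2}. Put $Z_k:=\chi_1+\cdots+\chi_k$ and $U_k:=\tau_1+\cdots+\tau_k$, so that $\nu=\min\{k\ge1:Z_k>g(U_k)\}$, $-Z_k=S_{U_k}$ is the walk at its $k$-th strict descending ladder epoch, and
\[
\e\nu=\sum_{j\ge0}\pr(\nu>j)=\sum_{j\ge0}\pr\bigl(Z_i\le g(U_i)\text{ for all }1\le i\le j\bigr).
\]
The plan is to bound each summand and then sum, the finiteness of the total reducing to \eqref{g-condition2} by routine regular-variation computations (recall that $h$ is regularly varying of index $\alpha(1-\rho)\le1$ and $\tau$ has tail index $\rho-1$, so $U_j$ is of order $j^{1/(1-\rho)}$ and $h(g(U_j))/h(c(U_j))$ is regularly varying of a strictly negative index in $U_j$ as soon as $g$ is dominated by $x^\gamma$ with $\gamma<1/\alpha$).

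The first step is a conditional factorisation. Given $\tau_1,\dots,\tau_j$ the ladder heights $\chi_1,\dots,\chi_j$ are independent, $\chi_i$ distributed as $-S_{T_0}$ conditioned on $T_0=\tau_i$; since the $\chi_i$ are positive and $g$ is increasing, $\{Z_i\le g(U_i),\,i\le j\}\subseteq\bigcap_{i\in I}\{\chi_i\le g(U_j)\}$ for every index set $I\subseteq\{1,\dots,j\}$, and therefore
\[
\pr(\nu>j\mid\tau_1,\dots,\tau_j)\le\prod_{i\in I}\pr\bigl(-S_{T_0}\le g(U_j)\mid T_0=\tau_i\bigr).
\]
I would combine this with a local estimate of the form $\pr(-S_{T_0}\le y\mid T_0=m)\le C\,h(y)/h(c(m))$ for $1\le y\le c(m)$, a one-sided local limit bound of the same nature as those underlying \eqref{eq2} and provable by duality from the known asymptotics of $\pr(T_0>m)$ together with a local limit theorem for random walks conditioned to stay non-negative. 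Choosing $I$ to consist of the indices $i$ for which $\tau_i$ is large (roughly $\tau_i\gtrsim g(U_j)^\alpha$) turns the corresponding factors into genuine contractions bounded by $C\,h(g(U_j))/h(c(\tau_i))$, and reduces $\pr(\nu>j)$ to the expectation of such a product plus the probability that no $\tau_i$ is large, the latter being readily seen to be summable in $j$ because $g(x)^\alpha=o(x)$.

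The remaining, and principal, difficulty is to bound $\sum_j\e\bigl[\prod_{i:\,\tau_i\text{ large}}C\,h(g(U_j))/h(c(\tau_i))\bigr]$. One large $\tau_i$ already contributes a factor of negative regular-variation index in $U_j$, but---just as in Theorem~\ref{T1}, where subadditivity of $h\circ g$ is exactly what makes this step go through---a single large ladder interval does not suffice, and one must iterate the estimate over the $\asymp\log U_j$ dyadic time-blocks $[2^m,2^{m+1})$ in which the $\tau_i$ fall, controlling the contribution of each block through the uniform bound $\pr(T_y>n)\asymp h(y)\,\pr(T_0>n)$ valid for $y\lesssim c(n)$. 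Without subadditivity these block contributions cannot be summed exactly; one uses instead a union bound over the $\asymp\log n$ blocks, and it is precisely here that $h(c(\cdot))$ is weakened to $h(c(\cdot/\log\cdot))$, i.e.\ that the extra logarithmic factor distinguishing \eqref{g-condition2} from \eqref{g-condition} is consumed. I expect this block-by-block bookkeeping---in particular, neutralising the dependence between blocks and verifying that the geometric-in-$m$ per-block costs really do assemble into the integral in \eqref{g-condition2}---to be the technical heart of the proof, the remainder being regular-variation calculus.
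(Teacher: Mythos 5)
Your overall strategy---prove $\e\nu<\infty$ and invoke \eqref{general}, which indeed holds for every increasing $g$---is legitimate in principle, but the write-up stops exactly where the proof would have to begin. The step you defer (the decoupling over the $\asymp\log U_j$ dyadic blocks and the verification that the per-block costs assemble into the integral in \eqref{g-condition2}) is not routine regular-variation calculus; it is the entire content of the theorem. That it cannot be waved through is visible already from the one-factor version of your own bound: with a single dominating ladder interval you get at best $\pr(\nu>j)\lesssim\e\bigl[h(g(U_j))/h(c(U_j))\bigr]$, and in the finite-variance case ($h(x)\asymp x$, $c(x)\asymp x^{1/2}$, $U_j\asymp j^{2}$, $g(x)=x^{\gamma}$ with $0<\gamma<1/2$) this is of order $j^{2\gamma-1}$, which is \emph{not} summable, although $\e\nu<\infty$ is exactly what Theorem \ref{T2} asserts in that situation. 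So everything hinges on extracting many conditionally independent contraction factors, one per block, neutralising the coupling between blocks created by the running constraints $Z_i\le g(U_i)$, and showing that the log-many union bound consumes precisely the logarithm distinguishing \eqref{g-condition2} from \eqref{g-condition}; none of this is carried out, and you say yourself that you ``expect'' it to work. The auxiliary claims are also unproved: the summability of the event that no $\tau_i$ is large is delicate exactly in the borderline regime $g(x)\approx c(x/\log x)$, and the conditional local estimate $\pr(-S_{T_0}\le y\mid T_0=m)\le C\,h(y)/h(c(m))$ is itself a nontrivial local limit theorem for the joint law of ladder epoch and ladder height, not a corollary of \eqref{eq2}.

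For comparison, the paper abandons the $\e\nu$ route for Theorem \ref{T2} altogether (this is precisely the point where the subadditivity assumption of Theorem \ref{T1} could not be removed). It shows instead that $Y_n=h(S_n+g(n)){\rm 1}\{T_g>n\}$ is a submartingale, proves $\sup_n\e Y_n<\infty$ by iterating along geometric times $n_m\asymp\delta^{-m}$ with the stopping time $\nu_n=\min\{k\ge1:|S_k|\ge c(\varepsilon_n n)\}$, $\varepsilon_n\asymp1/\log n$; condition \eqref{g-condition2} enters exactly through the finiteness of the product $\prod_j\bigl(1+h(g(n_j))/h(c(\varepsilon_{n_j}n_j))\bigr)$, which yields the existence of $V(g)=\lim_n\e Y_n$. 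The asymptotics \eqref{T1.1} is then transferred to $\pr(T_g>n)$ via Doney's uniform estimate $\pr(T_x>n)\sim h(x)n^{\rho-1}L(n)$ for $x=o(c(n))$, applied after the walk first exceeds $c(\varepsilon_n n)$, together with a truncation handling large $S_{\nu_n}$. If you wish to salvage your approach, you would in effect have to rebuild this machinery in the language of ladder variables, and the dyadic bookkeeping you postponed is exactly where that work lives.
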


Integral test \eqref{g-condition2} is fulfilled for any increasing
regularly varying with index $\gamma<1/\alpha$ function $g$ and,
consequently, the subadditivity assumption is superflous for such
functions. On the other hand, it is not difficult to construct a regularly
varying with index $1/\alpha$ function $g$ which satisfies
\eqref{g-condition} but the integral in \eqref{g-condition2} becomes
infinite.

Our proof of Theorem \ref{T2} is based on an appropriate adaption of
the method from \cite{DW13}, where we studied the asymptotic behaviour
of exit times from cones of multidimensional random walks. The main
idea in \cite{DW13} was to use the classical universality for random
walks: If certain moments of a random walk are finite (this means that
random walk is sufficiently close to the Brownian motion) then the
asymptotics for the exit time of this random walk is the same, up to a
constant factor, as for the Brownian motion. In the proof of
Theorem~\ref{T3} we use a completely different type of universality:
We fix the distribution of a random walk and look for boundaries $g$
such that $T_g$ and $T_0$ have the same rate.

\vspace{6pt}

Now we turn to exit times from a shrinking domain. Define
$$
\widehat{T}_g:=\min\{n\geq1: S_n<g(n)\}.
$$
For $\widehat{T}_g$ one does not have any representation similar to
\eqref{representation}. For that reason there is no analogue of
Theorem~\ref{T1} for $\widehat{T}_g$. But one can look at the ratio
$\mathbf{P}(\widehat{T}_g>n)/\mathbf{P}(T_0>n)$  in the following way: If $g$
is positive then
$$
\frac{\mathbf{P}(\widehat{T}_g>n)}{\mathbf{P}(T_0>n)}
=\mathbf{P}(\widehat{T}_g>n|T_0>n)
$$
and one can try to represent the limit of this conditional probability as
a functional of $\{S_n\}$ conditioned to stay nonnegative. To formulate
the corresponding result we have to introduce some notation. It is
well-known that $h(x)$ is a positive harmonic function for $\{S_n\}$
killed at leaving $[0,\infty)$, that is,
$$
\mathbf{E}[h(x+X),x+X>0]=h(x),\quad x\geq0.
$$
We denote by $\mathbf{P}^h$ the Doob transform of $\mathbf{P}$ by the
function $h$. More precisely, $\mathbf{P}^h$ is the distribution of the
Markov chain on $[0,\infty)$ with transition function
$$
p^h(x,dy)=\frac{h(y)}{h(x)}\mathbf{P}(x+X\in dy),\quad x,y\geq0.
$$
The following statement is immediate from Lemma 2.5 in Afanasyev, Geiger,
Kersting and Vatutin \cite{AGKV}.
\begin{proposition}
\label{P6}
If \eqref{eq1} holds then
\begin{equation}
\label{T6.2}
\lim_{n\to\infty}\frac{\mathbf{P}(\widehat{T}_g>n)}{\mathbf{P}(T_0>n)}
=\mathbf{P}^h(\widehat{T}_g=\infty).
\end{equation}
\end{proposition}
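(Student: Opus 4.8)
The plan is to rewrite the ratio as a conditional probability and then recognise the event $\{\widehat{T}_g>n\}$ as one to which the conditioned‑to‑stay‑nonnegative limit theorem, Lemma~2.5 of \cite{AGKV}, applies. Since $g$ is positive, $\{\widehat{T}_g>n\}=\{S_k\ge g(k),\ 1\le k\le n\}\subseteq\{S_k\ge 0,\ 1\le k\le n\}=\{T_0>n\}$, so that
\[
\frac{\mathbf{P}(\widehat{T}_g>n)}{\mathbf{P}(T_0>n)}=\mathbf{P}(\widehat{T}_g>n\mid T_0>n),
\]
and $\mathbf{1}\{\widehat{T}_g>n\}=\prod_{k=1}^{n}\mathbf{1}\{S_k\ge g(k)\}$ decreases pointwise to $\mathbf{1}\{\widehat{T}_g=\infty\}$ as $n\to\infty$.

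The upper bound is soft: for $n\ge m$ the event $\{\widehat{T}_g>n\}$ sits inside the $\sigma(S_1,\dots,S_m)$‑measurable event $\{\widehat{T}_g>m\}$, and the finite‑dimensional part of Lemma~2.5 of \cite{AGKV} --- weak convergence, under \eqref{eq1}, of $(S_1,\dots,S_m)$ under $\mathbf{P}(\,\cdot\mid T_0>n)$ to its law under $\mathbf{P}^h$ --- yields $\limsup_n\mathbf{P}(\widehat{T}_g>n\mid T_0>n)\le\mathbf{P}^h(\widehat{T}_g>m)$; letting $m\to\infty$ gives $\limsup_n\mathbf{P}(\widehat{T}_g>n\mid T_0>n)\le\mathbf{P}^h(\widehat{T}_g=\infty)$. (If $X$ has atoms one first replaces $g$ by $g\pm\varepsilon$ so that the relevant boundary values are $\mathbf{P}^h$‑null, and then lets $\varepsilon\to0$.)

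The real work is the matching lower bound, where one must rule out an escape of mass of the conditioned walk across $g$ near the terminal time $n$. Write, for $M\ge m$,
\[
\mathbf{P}(\widehat{T}_g>n\mid T_0>n)=\mathbf{P}(\widehat{T}_g>M\mid T_0>n)-\mathbf{P}(M<\widehat{T}_g\le n\mid T_0>n);
\]
the first term tends to $\mathbf{P}^h(\widehat{T}_g>M)$ by the previous step, so it suffices to prove $\limsup_M\limsup_n\mathbf{P}(M<\widehat{T}_g\le n\mid T_0>n)=0$. Decomposing on $\{\widehat{T}_g=k\}$ according to whether $k$ is bounded, of smaller order than $n$, or comparable to $n$: the contribution of $k$ in a fixed finite range converges to at most $\mathbf{P}^h(M<\widehat{T}_g<\infty)$, which vanishes as $M\to\infty$ because $\{\widehat{T}_g\le M\}\uparrow\{\widehat{T}_g<\infty\}$; the contribution of $M<k\le\varepsilon n$ is handled by the same comparison together with a uniform-in-$k$ bound $\mathbf{P}(\widehat{T}_g=k\mid T_0>n)\lesssim\mathbf{P}^h(\widehat{T}_g=k)$; and the contribution of $\varepsilon n<k\le n$ is negligible because, by the quantitative side of Lemma~2.5 of \cite{AGKV}, the walk conditioned on $\{T_0>n\}$ stays at height of order $c(n)$ across the bulk of $[1,n]$ and hence above the positive level $g(k)$ when $g(x)=o(c(x))$ --- while if $g$ does not grow slower than $c$ the same estimate forces $\mathbf{P}^h(\widehat{T}_g=\infty)=0$ and drives the whole left‑hand side to $0$ as well. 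Putting the two bounds together gives \eqref{T6.2}. The single genuine obstacle is this last block --- the behaviour of the walk conditioned on $\{T_0>n\}$ near time $n$ --- which is exactly the part of Lemma~2.5 of \cite{AGKV} beyond the plain finite‑dimensional convergence, and this is why the statement is ``immediate'' only once that lemma is invoked in full strength.
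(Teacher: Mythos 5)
Your upper bound is fine, but the proof as a whole has a genuine gap, and it sits exactly where you locate it yourself: the block $\varepsilon n<k\le n$ in the lower bound. The argument you offer there is not a proof. First, it is outside the generality of the statement: Proposition \ref{P6} assumes only Spitzer's condition \eqref{eq1} and positivity of $g$; it does not assume $X\in\mathcal{D}(\alpha,\beta)$, so there need not exist a norming sequence $c(n)$ at all, and no growth restriction such as $g(x)=o(c(x))$ is available. Second, even in the asymptotically stable case, the fallback branch of your dichotomy --- ``if $g$ does not grow slower than $c$ then $\mathbf{P}^h(\widehat{T}_g=\infty)=0$'' --- is asserted, not proved; it is a lower-envelope statement for the walk conditioned to stay positive, i.e. precisely the type of result (the analogue of the Hambly--Kersting--Kyprianou integral test \eqref{hkk}) which the paper explicitly says is unknown for infinite variance. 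Third, the positive branch (``the walk conditioned on $\{T_0>n\}$ stays at height of order $c(n)$ across the bulk of $[1,n]$, hence above $g(k)$'') is a statement about the meander staying above a moving level uniformly over $\varepsilon n<k\le n$, which does not follow from finite-dimensional convergence and would itself need a quantitative argument. So the ``real work'' you identify is left undone.

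The reason the paper calls the proposition immediate is that Lemma~2.5 of \cite{AGKV} is stronger than the finite-dimensional consequence you extract from it: it states that if $V_n$ is a uniformly bounded sequence of random variables adapted to $\mathcal{F}_n=\sigma(S_1,\dots,S_n)$ which converges $\mathbf{P}^h$-a.s.\ to $V_\infty$, then $\mathbf{E}[V_n\mid T_0>n]\to\mathbf{E}^h[V_\infty]$, under \eqref{eq1} alone. Taking $V_n=\ind\{\widehat{T}_g>n\}$, which is adapted, bounded by $1$, and decreases pointwise to $\ind\{\widehat{T}_g=\infty\}$, and using $\{\widehat{T}_g>n\}\subseteq\{T_0>n\}$ (positivity of $g$) to write the ratio as $\mathbf{P}(\widehat{T}_g>n\mid T_0>n)$, the lemma yields \eqref{T6.2} in one line --- no control of the walk near time $n$, no condition relating $g$ to $c$, and no atom/boundary discussion is needed. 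In short: invoke the lemma for the adapted indicator sequence itself rather than only for fixed finite-dimensional marginals, and the hard block of your argument disappears; as written, that block is the missing (and unproven) piece.
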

Greenwood and Novikov \cite[Theorem 2]{GN} have shown the existence of the limit 
$\lim_{n\to\infty}\frac{\mathbf{P}(\widehat{T}_g>n)}{\mathbf{P}(T_0>n)}$
for random walks with $\mathbf{E}X=0$ and $\mathbf{E}e^{-\lambda X}<\infty$
for some $\lambda>0$. They have also shown that this limit is positive if and
only if $\mathbf{E} g(T_0)<\infty$. The information about the positivity of this
limit is very important, since if it is zero then the asymptotic behaviour of
$\mathbf{P}(\widehat{T}_g>n)$ remains unknown. Thus, in order to apply
Proposition~\ref{P6} we need to find reasonable conditions for the positivity of
$\mathbf{P}^h(\widehat{T}_g=\infty)$. Since
$\mathbf{P}^h(\widehat{T}_g=\infty)=\mathbf{P}^h(S_k\geq g(k)\text{ for all }k\geq1)$,
we infer that $\mathbf{P}^h(\widehat{T}_g=\infty)$ for every function $g$ such that
$\mathbf{P}^h(S_k<g(k)\text{ i.o.})=0$. One has also the reverse implication:
if $\mathbf{P}^h(S_k<g(k)\text{ i.o.})=1$ then $\mathbf{P}^h(\widehat{T}_g=\infty)=0$.

By Theorem 1 in Hambly, Kersting and Kyprianou \cite{HKK03} if
$\mathbf{E}X^2<\infty$ and $\mathbf{E}X=0$ then
\begin{align}
\label{hkk}
\mathbf{P}^h(S_k<g(k)\text{ i.o.})=0\text{ or }1\quad \text{ accordingly as }\quad
\int_1^\infty\frac{g(x)}{x^{3/2}}dx<\infty\text{ or }=\infty.
\end{align}
Consequently, for any oscillating random walk with finite variance,
$$
\mathbf{P}^h(\widehat{T}_g=\infty)>0\quad \Leftrightarrow\quad
\int_1^\infty\frac{g(x)}{x^{3/2}}dx<\infty.
$$
Unfortunately, we did not find results similar to \eqref{hkk} for random
walks with infinite variance. Thus, we do not have any criterion for the
positivity of $\mathbf{P}^h(\widehat{T}_g=\infty)$ for oscillating random
walks with infinite variance.

It turns out that the approach used in Theorem~\ref{T2} can be applied
to $\widehat{T}_g$ as well.
\begin{theorem}
\label{T3}
Assume that $X\in\mathcal{D}(\alpha,\beta)$ and that
\eqref{g-condition2} holds. Then there exists
$\widehat{V}(g)\in(0,\infty)$ such that, as $n\to\infty$,
\begin{equation}
\label{T3.1}
\mathbf{P}(\widehat{T}_g>n)\sim \widehat{V}(g)\mathbf{P}(T_0>n).
\end{equation}
\end{theorem}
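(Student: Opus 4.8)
The plan is to follow the strategy already sketched for Theorem~\ref{T2}, adapted to the shrinking-domain exit time $\widehat{T}_g$. By Proposition~\ref{P6} it suffices to establish two things: first, that $\mathbf{P}^h(\widehat{T}_g=\infty)>0$ under \eqref{g-condition2}, and second, that the convergence in \eqref{T6.2} can be upgraded to the strong asymptotic equivalence \eqref{T3.1} with $\widehat V(g)=\mathbf{P}^h(\widehat{T}_g=\infty)$. In fact, since \eqref{T6.2} already gives the limit as $\mathbf{P}^h(\widehat{T}_g=\infty)$, the only genuinely new point is positivity of that probability; once positivity is known, \eqref{T3.1} follows from Proposition~\ref{P6}. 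So the heart of the matter is: show that under \eqref{g-condition2} the $h$-transformed walk stays above $g$ with positive probability.

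To prove $\mathbf{P}^h(\widehat{T}_g=\infty)>0$, I would use the same universality-type comparison as in Theorem~\ref{T2}: decompose the trajectory $\{S_k\}_{k\ge1}$ into dyadic-type blocks $[2^{j},2^{j+1})$ and estimate, under $\mathbf{P}^h$, the probability that the walk dips below $g(k)$ somewhere in block $j$. Because $h$ is harmonic for the killed walk, $\mathbf{P}^h$ is the walk conditioned to stay nonnegative, and by the local limit theorem / Caravenna-type estimates the density of $S_k/c(k)$ under $\mathbf{P}^h$ is comparable to that of the meander of the limiting stable process. On the block $[2^j,2^{j+1})$ the typical size of $S_k$ is of order $c(2^j)$, while $g(k)\le g(2^{j+1})$; hence the probability of a downcrossing in block $j$ is controlled by something like $C\, h(g(2^{j+1}))/h(c(2^j))$ up to the slowly varying corrections that produce the $\log x$ in \eqref{g-condition2}. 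Summability of $\sum_j h(g(2^{j+1}))/h(c(2^j/\log 2^j))$ — which is exactly the discretised form of the integral in \eqref{g-condition2} — then forces the tail of these downcrossing events to have small total probability, so by a Borel--Cantelli-plus-shift argument there is a starting point from which the $h$-walk never goes below $g$; a standard irreducibility/positivity argument then transfers this to the statement $\mathbf{P}^h(\widehat{T}_g=\infty)>0$.

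More concretely, I expect the argument to run as follows. Fix a large level $m$ and write $\mathbf{P}^h(\widehat{T}_g=\infty)\ge \mathbf{P}^h(S_k\ge g(k)\ \forall k\ge1)$; condition on $S_m=y$ being large (of order $c(m)\log m$, which has positive $\mathbf{P}^h$-probability since the meander density is bounded below on compacts away from zero), and from that point estimate the probability of ever dipping below $g$ using the block decomposition and the Markov property under $\mathbf{P}^h$. Each block contributes an error summable by \eqref{g-condition2}, so for $m$ large the total error is $<1$, giving a positive lower bound. One then needs to check that starting the walk at a high level is consistent with $S_k\ge g(k)$ for $k\le m$ as well — here the increasing nature of $g$ and the fact that \eqref{g-condition2} forces $g(x)=o(c(x/\log x))$, hence $g(x)\ll c(x)$, means the constraint on the initial segment costs only a constant factor. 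Combining these bounds yields $\widehat V(g):=\mathbf{P}^h(\widehat{T}_g=\infty)\in(0,\infty)$, and Proposition~\ref{P6} finishes the proof.

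The main obstacle, as in Theorem~\ref{T2}, is obtaining sufficiently uniform estimates for the one-block downcrossing probabilities under the $h$-transform: one needs bounds on $\mathbf{P}^h(S_k< g(k)\text{ for some }k\in[2^j,2^{j+1})\mid S_{2^j}=y)$ that are uniform in $y$ ranging over the relevant scale and that produce precisely the $h(g)/h(c(\cdot/\log))$ decay. This requires combining the strong approximation of the $h$-transformed walk by the stable meander (or by $\mathbf{P}^h$ for the limiting process) with tail estimates for the minimum of the meander near zero, and carefully tracking the slowly varying factors $l_1$ — this bookkeeping, and the need to handle the region where $S_{2^j}$ itself is atypically small, is where the technical weight of \cite{DW13}-type arguments lies. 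Everything else (the Borel--Cantelli step, the reduction via Proposition~\ref{P6}, the finiteness $\widehat V(g)\le1$) is routine.
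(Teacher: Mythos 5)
Your overall route is genuinely different from the paper's: you reduce Theorem~\ref{T3} via Proposition~\ref{P6} to the single statement $\mathbf{P}^h(\widehat{T}_g=\infty)>0$, whereas the paper never uses Proposition~\ref{P6} in the proof at all. Instead it mirrors Theorem~\ref{T2}: it shows that $h(S_n-g(n)){\rm 1}\{\widehat{T}_g>n\}$ is a supermartingale, proves that $\inf_n\mathbf{E}[h(S_n-g(n));\widehat{T}_g>n]>0$ by iterating along the geometric subsequence $n_m$ (this is where \eqref{g-condition2} and Lemma~\ref{lem.concentration} enter, through the product $\prod_j(1-h(g(n_j))/h(c(\varepsilon_{n_j}n_j)))$), and then re-derives the asymptotics with the stopping time $\nu_n$ and Lemma~\ref{lem2}. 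Your reduction is legitimate ($g\ge0$ gives $\{\widehat{T}_g>n\}\subseteq\{T_0>n\}$, the limit in \eqref{T6.2} exists, and finiteness of $\widehat V(g)$ is trivial), but note that the paper explicitly chose \emph{not} to follow this route because, for infinite variance, no criterion of Hambly--Kersting--Kyprianou type for the lower functions of the conditioned walk was available; so the entire burden of your proof falls on the block estimate, and that is exactly where your proposal has a genuine gap.

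The gap is the one-block bound itself: you assert $\mathbf{P}^h(S_k<g(k)\text{ for some }k\in[2^j,2^{j+1}))\le C\,h(g(2^{j+1}))/h(c(2^j/\log 2^j))$ and propose to obtain it from strong approximation by the stable meander plus pointwise density comparisons. That does not constitute a proof: comparability of one-dimensional densities does not control the minimum of the path over a block, uniform meander-minimum estimates in the infinite-variance regime are precisely the missing HKK-type input, and your auxiliary step of conditioning on $S_m$ of order $c(m)\log m$ ``because the meander density is bounded below on compacts'' is wrong as stated, since $S_m/c(m)\asymp\log m$ is far outside the compact scale. The irony is that the estimate you flag as the main obstacle has a short proof by optional stopping which you did not use: with $\sigma_j:=\min\{k\ge 2^j: S_k<g(2^{j+1})\}$, since $h(S_k){\rm 1}\{T_0>k\}$ is a martingale, $\mathbf{P}^h(\sigma_j<2^{j+1})=\mathbf{E}[h(S_{\sigma_j});\sigma_j<2^{j+1},T_0>\sigma_j]\le h(g(2^{j+1}))\,\mathbf{P}(T_0>2^j)\le C\,h(g(2^{j+1}))/h(c(2^j))$, which is summable already under the weaker condition \eqref{g-condition}; Borel--Cantelli then gives $\mathbf{P}^h(S_k\ge g(k)\text{ for all large }k)=1$, and the passage to ``for all $k\ge1$ with positive probability'' needs the finite-segment argument you only gesture at (it requires $\mathbf{P}(\widehat{T}_g>m)>0$ and reaching a high level while staying above $g(m)$; the asymptotic relation $g(x)=o(c(x/\log x))$ says nothing about the initial segment). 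So your strategy can be completed, and in fact more cheaply than you suggest, but as submitted the decisive estimate is unproved and the machinery you invoke would not straightforwardly deliver it.
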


\section{Proof of Theorem \ref{T1}}
We first derive \eqref{general}. If $\mathbf{E}\nu$ is finite then
the desired relation follows immediately from Theorem 2 in Greenwood
and Monroe \cite{GM77}. Assume that $\mathbf{E}\nu=\infty$. Since
$\tau$ is positive,
$$
\mathbf{P}(T_g>n)\geq\mathbf{P}(\sum_{k=1}^{\nu\wedge m}\tau_k>n)
$$
for any $m\geq1$. Applying Theorem 2 from \cite{GM77} to the stopping
time $\nu\wedge m$, we obtain
$$
\liminf_{n\to\infty}\frac{\mathbf{P}(T_g>n)}{\mathbf{P}(T_0>n)}
\geq\mathbf{E}[\nu\wedge m].
$$
Letting $m$ go to infinity, we complete the derivation of \eqref{general}.

As it was already mentioned in the introduction, Theorem \ref{T1} 
follows from \eqref{general} and the following statement.
\begin{proposition}
\label{finite_mean}
Under the assumptions of Theorem \ref{T1},
\begin{equation}\label{P1.1}
\mathbf{E}\nu<\infty.
\end{equation}
\end{proposition}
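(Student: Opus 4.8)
The plan is to bound $\nu$ by controlling how far the random walk $(\sigma_k,\Sigma_k):=(\tau_1+\cdots+\tau_k,\ \chi_1+\cdots+\chi_k)$ of ladder epoch/height pairs must travel before $\Sigma_k$ overtakes $g(\sigma_k)$. The key asymmetry is that $\tau$ has a heavy tail (index $\rho-1\in(-1,0)$, so $\mathbf{E}\tau=\infty$) while $\chi$ may or may not have finite mean; in either case the ladder process drifts so that $\Sigma_k$ grows essentially like $h^{-1}(1/k)$-type scales relative to $\sigma_k$, and the condition \eqref{hg-moment} is precisely what is needed to make the first successful crossing happen with a summable tail in $k$. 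Concretely, I would write
\[
\mathbf{E}\nu=\sum_{k=0}^\infty\mathbf{P}(\nu>k)
=1+\sum_{k=1}^\infty\mathbf{P}\bigl(\Sigma_j\le g(\sigma_j)\text{ for all }j\le k\bigr),
\]
and aim to show the $k$-th summand is bounded by something summable. The natural upper bound is $\mathbf{P}(\nu>k)\le\mathbf{P}(\Sigma_k\le g(\sigma_k))$, but since $g$ is increasing this is $\ge\mathbf{P}(\Sigma_k\le g(\sigma_k),\ \sigma_k\ge t)$ for any $t$, which is too lossy on its own; instead I would split according to whether $\sigma_k$ is large or small and exploit the subadditivity of $h\circ g$ to turn $\mathbf{E}h(g(T_0))<\infty$ into control of $\mathbf{E}h(g(\sigma_k))$.

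The central computation I expect to need is a one-step estimate of the form: given the past, the probability that $\chi_{k+1}+\Sigma_k$ fails to exceed $g(\sigma_k+\tau_{k+1})$ is small when $h(g(\sigma_k))$ is small compared to $\Sigma_k$. The harmonic-function / renewal interpretation of $h$ is what makes this work: since $h$ is the renewal function of the strict descending ladder height process, $\mathbf{P}(\Sigma_m\le y\text{ for all }m\le k)$ is comparable, up to constants, to $h(y)/$(something growing in $k$), and more usefully $\mathbf{E}[h(y-\chi);\ \chi\le y]$ relations let one run a supermartingale-type argument on $h(g(\sigma_k)-\Sigma_k)^+$ or on an appropriate ratio. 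I would first establish the clean identity or inequality
\[
\mathbf{P}(\nu>k)\le C\,\mathbf{E}\Bigl[\frac{h(g(\sigma_k))}{\Sigma_k}\wedge 1\Bigr]
\]
(or a variant with $h(g(\sigma_k))$ replaced by its subadditive majorant evaluated along the increments), and then use subadditivity to dominate $h(g(\sigma_k))\le\sum_{i\le k}h(g(\tau_i))$ in distribution, reducing the sum over $k$ to a renewal-theoretic series in the i.i.d.\ variables $h(g(\tau_i))$, whose finiteness is exactly \eqref{hg-moment} together with the tail index of $\tau$.

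The main obstacle will be the last reduction: controlling $\mathbf{E}[\,h(g(\sigma_k))/\Sigma_k\wedge 1\,]$ and summing it in $k$. One has to handle the event that $\sigma_k$ is atypically large (heavy tail of $\tau$) simultaneously with $\Sigma_k$ being atypically small, and show the joint contribution is summable; here the regular variation of the $\tau$-tail with index $\rho-1$ and the fact that $\chi$ and $\tau$ are linked through a single ladder step (so they cannot both be pathological independently) must be used carefully, presumably via a renewal bound $\mathbf{P}(\sigma_k\le t)\asymp (t/b_k)$-type estimate where $b_k$ is the scaling sequence for $\sigma$, combined with an analogous lower bound on $\Sigma_k$ on that event. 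I also expect to invoke the remark's relaxation — replacing $h\circ g$ by a subadditive majorant satisfying \eqref{hg-moment} — to avoid technical fuss, since only the majorant enters the domination step. Everything else (the $\liminf/\limsup$ sandwiching, passing from $\nu\wedge m$ to $\nu$) is already handled by \eqref{general} and the Greenwood–Monroe theorem quoted above, so the proposition is genuinely equivalent to the summability of this one series.
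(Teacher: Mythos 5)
There is a genuine gap, and it is located exactly where you defer the work: the proposed central bound $\mathbf{P}(\nu>k)\le C\,\mathbf{E}\bigl[h(g(\sigma_k))/\Sigma_k\wedge 1\bigr]$ (with $\sigma_k=\tau_1+\cdots+\tau_k$, $\Sigma_k=\chi_1+\cdots+\chi_k$) is never derived, and even if some inequality of this endpoint-at-time-$k$ shape were available, summing it over $k$ cannot give Proposition \ref{finite_mean} under the hypotheses of Theorem \ref{T1}. Summability of such one-time quantities is a complete-convergence (Hsu--Robbins/Spitzer type) phenomenon and requires roughly a \emph{second} moment of $h(g(\tau))$, strictly stronger than \eqref{hg-moment}. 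Concretely, take $X\geq -M$ bounded from below with $X\in\mathcal{D}(\alpha,1)$, $\alpha\in(1,2)$, so that $\chi\le M$, $h(x)\asymp x$, and $\mathbf{P}(\tau>t)$ is regularly varying with index $\rho-1=-1/\alpha$; choose $g(x)=x^{\gamma}$ with $\gamma\in((1-\rho)/2,\,1-\rho)$, so that \eqref{hg-moment} holds but $\mathbf{E}[g(\tau)^2]=\infty$. By the single-big-jump mechanism (any one of the $k$ epochs $\tau_i$ may exceed $g^{-1}(Mk)$ while $\Sigma_k\le Mk$ automatically), $\mathbf{P}(\Sigma_k\le g(\sigma_k))\ge c\,k\,\mathbf{P}(g(\tau)>Mk)$ for large $k$, and the same lower bound holds for $\mathbf{E}[h(g(\sigma_k))/\Sigma_k\wedge1]$; hence both series diverge. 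The reason this does not contradict $\mathbf{E}\nu<\infty$ is that $\{\nu>k\}$ is much smaller than $\{\Sigma_k\le g(\sigma_k)\}$: the path must stay below the curve at \emph{all} times before the big $\tau$-jump, which kills the factor $k$ coming from the choice of the jump's location. Your plan never uses this path constraint, so the reduction you call "equivalent" is in fact a strictly stronger statement that fails for admissible boundaries; fixing it would force you to estimate the probability of staying below the curve up to the big jump, i.e.\ to re-enter the real difficulty.

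The paper avoids per-$k$ tail estimates altogether and argues through stopping times with finite mean, in two regimes. If $\mathbf{E}\chi=\infty$, subadditivity of $h\circ g$ gives $\nu\le\mu$, where $\mu$ is the first $k$ with $h(\Sigma_k)>\sum_{i\le k}h(g(\tau_i))$, and a Lyapunov drift argument for $Z_n=\bigl(\sum_{i\le n}h(g(\tau_i))-h(\Sigma_n)\bigr)^+$, fed by Erickson's lemma (which converts $\mathbf{E}\chi=\infty$ into $\mathbf{E}[h(y+\chi)-h(y)]=\infty$ for all $y$) together with \eqref{hg-moment}, shows the chain has uniformly negative truncated drift away from a compact set, whence $\mathbf{E}\mu<\infty$. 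If $\mathbf{E}\chi<\infty$ (so $h(x)\sim Cx$ and the drift argument is unavailable), the paper groups the ladder steps into blocks of fixed length $k$ and only needs $\mathbf{E}\bigl[\chi_1+\cdots+\chi_k-g(\tau_1+\cdots+\tau_k)\bigr]>0$ for some $k$, i.e.\ $\mathbf{E}g(\tau_1+\cdots+\tau_k)=o(k)$, which it proves from $\mathbf{E}g(\tau)<\infty$ and the regular variation of the $\tau$-tail; finiteness of $\mathbf{E}\nu$ then follows because the first ascending ladder epoch of a random walk with positive drift has finite mean, with no second-moment assumption. Both devices exploit precisely the cumulative, pathwise structure that your endpoint bound discards.
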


\subsection{Proof of Proposition \ref{finite_mean} for random walks with $\mathbf{E}\chi=\infty$.}
It follows from the subadditivity of $h(g(x))$ that
$$
\nu\leq\mu:=\min\{k\ge 1: h(\chi_1+\ldots+\chi_k)>h(g(\tau_1))+\ldots+h(g(\tau_k))\}.
$$
Thus, it suffices to show that the expectation of $\mu$ is finite.
But the latter is a consequence of the positiv recurrence of zero
for the sequence
$$
Z_n:=\left(Z_0+\sum_{k=1}^n h(g(\tau_k))-h\left(\sum_{k=1}^n\chi_k\right)\right)^+,\quad n\geq1.
$$

According to Lemma 1 and Proposition in Erickson \cite{Erick73},
our assumption $\mathbf{E}\chi=\infty$ is equivalent to
$\mathbf{E}h(\chi)=\infty$. Consequently,
$$
\mathbf{E}[h(y+\chi)-h(y)]=\infty
$$
for every $y\geq0$. Combining this with \eqref{hg-moment}, we finally get
\begin{equation}
\label{moment}
\mathbf{E}[h(y+\chi)-h(y)-h(g(\tau))]=\infty
\end{equation}
for every $y\geq0$.

Fix some positive $x$ and $y$ such that $x>h(y)$. Then
\begin{align*}
&\mathbf{E}\left[Z_{n+1}-Z_n\Big|\sum_{k=1}^n h(g(\tau_k))=x,\sum_{k=1}^n\chi_k=y\right]\\
&\hspace{1cm}\leq \mathbf{E}[h(y)-h(y+\chi)+h(g(\tau));x+h(g(\tau))-h(y+\chi)>0]\\
&\hspace{1cm}=-\mathbf{E}[h(y+\chi)-h(y)-h(g(\tau));h(y+\chi)-h(y)-h(g(\tau))<x-h(y)].
\end{align*}
Taking into account \eqref{moment}, we conclude that there exists $A>0$ such that 
$$
\mathbf{E}\left[Z_{n+1}-Z_n\big|\sum_{k=1}^n h(g(\tau_k))=x,\sum_{k=1}^n\chi_k=y\right]\leq-1
$$
for all $x,y$ satisfying $x-h(y)>A$. This implies that the hitting time of $[0,A]$ by the
sequence $Z_n$ has finite mean. Since $0$ can be approached from any point of $[0,A]$ in a
finite time, we infer that $\mathbf{E}\mu<\infty$.
\subsection{Proof of Proposition \ref{finite_mean} for random walks with $\mathbf{E}\chi<\infty$.}
If random walk $S_n$ is such that $\mathbf{E}\chi<\infty$ then $h(x)\sim Cx$ as $x\to\infty$ and, in particular,
$\mathbf{E}h(\chi)<\infty$. Therefore, we can not use the sequence $Z_n$ from the previous subsection.
Fix some $k\geq1$ and consider 
$$
Z^{(k)}:=\chi_1+\chi_2+\ldots+\chi_k-g(\tau_1+\tau_2+...+\tau_k)
$$
and let $Z_i^{(k)}$ be independent copies of $Z$.  Using the subadditivity of $g$ once again, one can
easily show that
$$
\nu\leq k\mu^{(k)},
$$
where
$$
\mu^{(k)}:=\min\{n\geq1:Z_1^{(k)}+Z_2^{(k)}+\ldots+Z_n^{(k)}>0\}.
$$
From this we conclude that Proposition \ref{finite_mean} will follow from the existence of $k$
such that $\mathbf{E}Z^{(k)}>0$. Therefore, it suffices to show that
\begin{equation}
\label{suff.cond}
\lim_{k\to\infty}\frac{1}{k}\mathbf{E}g(\tau_1+\tau_2+...+\tau_k)=0.
\end{equation}

Set $U_k=\tau_1+\tau_2+...+\tau_k$ and note that $\tau_i\in\mathcal{D}(1-\rho,1)$.
Let $a(k)$ be the corresponding norming sequence, that is, $U_k/a(k)$ converges
weakly towards a stable distribution of index $1-\rho$.

Since $g$ is increasing and sublinear,
\begin{align*}
\mathbf{E}g(U_k)&=\sum_{j=0}^\infty\mathbf{E}[g(U_k);U_k\in[ja(k),(j+1)a(k))]\\
&\leq g(a(k))+2\sum_{j=1}^\infty g(ja(k))\mathbf{P}(U_k>ja(k)).
\end{align*}
Since the tail of $\tau$ is regularly varying with index $\rho-1>-1$,
$$
\mathbf{P}(U_k>x)\leq Ck\mathbf{P}(\tau>x),\quad x\geq0.
$$
Consequently,
\begin{align}
\label{1.step}
\nonumber
\mathbf{E}g(U_k)&\leq g(a(k))+C\sum_{j=1}^\infty g(ja(k))k\mathbf{P}(\tau>ja(k))\\
&\leq g(a(k))+Ck\mathbf{E}[g(\tau);\tau>a(k)].
\end{align}
It follows from the assumption $\mathbf{E} g(\tau)<\infty$ that
$$
\frac{g(a_k)}{k}\leq C g(a_k)\mathbf{P}(\tau>a(k))
\leq C\mathbf{E}[g(\tau);\tau>a(k)].
$$
Furthermore,
$$
\mathbf{E}[g(\tau);\tau>a(k)]\to0\quad\text{as }k\to\infty.
$$
Applying these relations to \eqref{1.step}, we get \eqref{suff.cond}.
This completes the proof of Proposition \ref{finite_mean}.
\subsection{Proof of Corollary \ref{C1}} It suffices to show that 
\eqref{hg-moment} and \eqref{g-condition} are equivalent. First we
note that if $X\in\mathcal{D}(\alpha,\beta)$ then
\begin{equation*}
\mathbf{P}(\tau_1>x)\sim C\frac{1}{h(c(x))}\quad\text{as }x\to\infty
\end{equation*}
and, consequently, \eqref{g-condition} is equivalent to 
$$
\int_1^\infty\frac{h(g(x))}{x}\mathbf{P}(\tau_1>x)dx<\infty.
$$
Consequently,
\begin{equation*}
\mbox{ \eqref{g-condition} is equivalent to}\quad
\mathbf{E}h(g(\tau_1))<\infty.
\end{equation*}

\section{Proof of Theorem \ref{T2}}
\subsection{Finiteness of $V(g)$}
Let $h$ be the renewal function of the decreasing ladder height
process. It is known that it is harmonic for $S_n$ killed at leaving
$\mathbb{R}^+$, that is, 
$$
\e[h(x+X_1);x+X_1>0] = h(x),\quad x>0.
$$
Extending $h(x)$ to the negative half line with $0$ we can
write this equality as $\e h(x+X_1)=h(x)$ for $x>0.$ 
\begin{lemma}\label{lem1}
The sequence $Y_n = h(S_n+g(n)) 1\{T_g>n\}$ is a submartingale. 
\end{lemma}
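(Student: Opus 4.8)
The plan is to check the submartingale inequality with respect to the natural filtration $\mathcal F_n:=\sigma(X_1,\dots,X_n)$, for which $T_g$ is a stopping time. First I would note that $Y_n$ is $\mathcal F_n$-measurable, since $S_n$, $g(n)$ and $\{T_g>n\}=\{S_k\geq -g(k)\text{ for }1\leq k\leq n\}$ all are, and that $Y_n$ is integrable: on $\{T_g>n\}$ one has $0\leq S_n+g(n)\leq|S_n|+g(n)$, so monotonicity of $h$ gives $Y_n\leq h(|S_n|+g(n))$ in all cases, and since $h$ is regularly varying of index $\alpha(1-\rho)<\alpha$ the Potter bounds yield $h(|S_n|+g(n))\leq C(1+|S_n|^{\theta}+g(n)^{\theta})$ for some $\theta$ with $\alpha(1-\rho)<\theta<\alpha$; as $\mathbf E|X|^{\theta}<\infty$ for every $\theta<\alpha$ and $g(n)$ is a fixed number, $\mathbf E Y_n<\infty$.

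The core of the argument is the evaluation of $\mathbf E[Y_{n+1}\mid\mathcal F_n]$. On the event $\{T_g\leq n\}$ we have $Y_n=0$, and $\{T_g>n+1\}\subseteq\{T_g>n\}$ forces $Y_{n+1}=0$, so the inequality is trivial there. On the event $\{T_g>n\}$, the definition of $T_g$ gives
\[
Y_{n+1}=h(S_{n+1}+g(n+1))\mathbf 1\{S_{n+1}+g(n+1)\geq0\}=h(S_{n+1}+g(n+1)),
\]
the last equality because $h$ has been extended by $0$ to $(-\infty,0)$, so both sides vanish when $S_{n+1}+g(n+1)<0$. Writing $S_{n+1}=S_n+X_{n+1}$ and $x:=S_n+g(n)$, using that $g$ is increasing (so $g(n+1)-g(n)\geq0$) and that the extended $h$ is non-decreasing on $\mathbb R$,
\[
\mathbf E[Y_{n+1}\mid\mathcal F_n]=\mathbf E[h(x+(g(n+1)-g(n))+X_{n+1})\mid\mathcal F_n]\geq\mathbf E[h(x+X_{n+1})\mid\mathcal F_n].
\]
Since $x=S_n+g(n)\geq0$ is $\mathcal F_n$-measurable and $X_{n+1}$ is independent of $\mathcal F_n$, the right-hand side equals $\mathbf E h(y+X)$ evaluated at $y=x$; by harmonicity of the extended $h$ this equals $h(x)=h(S_n+g(n))=Y_n$ whenever $x>0$. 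Thus $\mathbf E[Y_{n+1}\mid\mathcal F_n]\geq Y_n$ in all cases, modulo the boundary value $x=0$.

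The only delicate point is precisely this boundary case $x=S_n+g(n)=0$, for which the harmonicity identity $\mathbf E h(y+X)=h(y)$ was recorded only for $y>0$; I expect this to be the sole obstacle and a minor one. It can be dispatched either by invoking the standard fact that the renewal function of the strict descending ladder height satisfies $\mathbf E[h(y+X);y+X\geq0]=h(y)$ for all $y\geq0$ (with $h(0)=1$), or directly: on the $\mathcal F_n$-set $\{T_g>n,\ S_n+g(n)=0\}$ one bounds, using monotonicity of $h$ and of $g$, $Y_{n+1}=h(X_{n+1}+(g(n+1)-g(n)))\mathbf 1\{\cdot\geq0\}\geq h(X_{n+1})\mathbf 1\{X_{n+1}>0\}$, whence $\mathbf E[Y_{n+1}\mid\mathcal F_n]\geq\mathbf E[h(X);X>0]=h(0)=1=Y_n$. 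This completes the verification that $\{Y_n\}$ is a submartingale.
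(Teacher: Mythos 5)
Your proof is correct and takes essentially the same route as the paper's: drop the indicator using that $h$ vanishes on the negative half-line, then combine the harmonicity of $h$ with monotonicity of $h$ and $g$ (you apply monotonicity inside the expectation before harmonicity, the paper applies harmonicity at $S_n+g(n+1)$ first --- an immaterial reordering). Your extra checks of integrability and of the boundary case $S_n+g(n)=0$ are careful touches the paper leaves implicit.
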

\begin{proof}
Clearly,
\begin{align*}
\e\left[Y_{n+1}-Y_n|\mathcal{F}_n\right]
&=\e\left[\left(h(S_{n+1}+g(n+1))-h(S_{n}+g(n))\right){\rm 1}\{T_g>n\}|\mathcal{F}_n\right]\\
&\hspace{2cm}-\e\left[h(S_{n+1}+g(n+1)){\rm 1}\{T_g=n+1\}|\mathcal{F}_n\right].
\end{align*}
Now note that $h(S_{n+1}+g(n+1)){\rm 1}\{T_g=n+1\} =0$ since 
$h(x)=0$ for $x\le 0$. Next, by harmonicity of $h$ 
\begin{align*}
&\e\left[Y_{n+1}-Y_n|\mathcal{F}_n\right]\\
&\hspace{1cm}=\e\left[\left(h(S_{n+1}+g(n+1))\right){\rm 1}\{T_g>n\}|\mathcal{F}_n\right]
-h(S_{n}+g(n)) {\rm 1}\{T_g>n\}\\
&\hspace{1cm}= h(S_{n}+g(n+1)) {\rm 1}\{T_g>n\} - 
h(S_{n}+g(n)) {\rm 1}\{T_g>n\} \ge 0,
\end{align*}
since $h$ and $g$ are monotone increasing.
\end{proof}

Fix any positive sequence $\varepsilon_n\to 0$ and denote 
$$
\nu_{n}:=\min\{k\ge 1:|S_{k}|\ge c(\varepsilon_n n) \}.
$$
\begin{lemma}\label{lem.concentration}
There exists a constant $C$ such that
$$
\pr(\nu_n >\delta n) \le e^{-C\delta/\varepsilon_n}
$$
for all $\delta>0$.
\end{lemma}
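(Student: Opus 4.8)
The plan is a block decomposition: cut the time interval $[0,\delta n]$ into roughly $\delta/\varepsilon_n$ independent blocks of length of order $\varepsilon_n n$ and show that on each block the walk has a chance bounded away from $0$, \emph{uniformly in $n$}, of leaving the interval $(-c(\varepsilon_n n),c(\varepsilon_n n))$. Put $N:=\lfloor\delta n\rfloor$, fix a block length $m_n:=\lceil\varepsilon_n n\rceil$, and set $K:=\lfloor N/m_n\rfloor$. On the event $\{\nu_n>\delta n\}$ every partial sum $S_k$ with $0\le k\le N$ lies in $(-c(\varepsilon_n n),c(\varepsilon_n n))$, so $|S_{(j+1)m_n}-S_{jm_n}|<2c(\varepsilon_n n)$ for each $j=0,\dots,K-1$; since the block increments $S_{(j+1)m_n}-S_{jm_n}$ are independent copies of $S_{m_n}$, this yields
$$
\mathbf{P}(\nu_n>\delta n)\le\bigl(\mathbf{P}(|S_{m_n}|<2c(\varepsilon_n n))\bigr)^{K}.
$$

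The second step is a uniform anti-concentration bound for the middle factor. Since $c(\cdot)$ is regularly varying of index $1/\alpha$ and $m_n/(\varepsilon_n n)\to1$, we have $c(\varepsilon_n n)/c(m_n)\to1$, hence $2c(\varepsilon_n n)\le 3c(m_n)$ for all large $n$ and $\mathbf{P}(|S_{m_n}|<2c(\varepsilon_n n))\le\mathbf{P}(|S_{m_n}|<3c(m_n))$. By the definition of an asymptotically stable walk, $S_{m_n}/c(m_n)$ converges weakly to the (non-degenerate) stable law with characteristic function \eqref{std}, which assigns mass $p<1$ to $(-3,3)$. Therefore $\limsup_n\mathbf{P}(|S_{m_n}|<2c(\varepsilon_n n))\le p<1$, so there are $q\in(0,1)$ and $n_0$ with $\mathbf{P}(|S_{m_n}|<2c(\varepsilon_n n))\le q$ for all $n\ge n_0$.

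Finally I would count blocks and collect constants. For $n\ge n_0$ one has $K=\lfloor N/m_n\rfloor\ge c_1\delta/\varepsilon_n$ with a constant $c_1>0$ depending only on the walk (because $N\sim\delta n$ and $m_n\sim\varepsilon_n n$), at least once $\delta/\varepsilon_n$ exceeds a fixed level; combining this with the two bounds above gives $\mathbf{P}(\nu_n>\delta n)\le q^{\,c_1\delta/\varepsilon_n}=e^{-C\delta/\varepsilon_n}$ with $C:=c_1\log(1/q)$. The remaining cases — the finitely many $n<n_0$, and the range where $\delta/\varepsilon_n$ stays bounded — are handled by shrinking $C$: for each fixed $n$ the exit time of $\{S_k\}$ from the bounded interval $(-c(\varepsilon_n n),c(\varepsilon_n n))$ has geometric tails, so $\mathbf{P}(\nu_n>\delta n)$ decays exponentially in $\delta$, and the stated form is reached after decreasing the constant.

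The crux, I expect, is the uniform anti-concentration bound in the second paragraph: $q<1$ must hold simultaneously for all large $n$, and this is exactly where the hypothesis $X\in\mathcal D(\alpha,\beta)$ is used — the normalised sums $S_{m_n}/c(m_n)$ converge to a genuinely spread-out stable distribution, so no fixed bounded interval can trap a fraction of their mass tending to $1$. The rest is routine bookkeeping with regularly varying functions and integer parts.
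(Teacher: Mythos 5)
The paper states Lemma~\ref{lem.concentration} without proof, so there is no argument of the authors to compare against; your block decomposition is the standard way to prove such concentration bounds and its core is sound. The main steps are correct: on $\{\nu_n>\delta n\}$ every block increment $S_{(j+1)m_n}-S_{jm_n}$ is smaller than $2c(\varepsilon_n n)$ in absolute value, the increments are i.i.d.\ copies of $S_{m_n}$, and since $S_{m_n}/c(m_n)$ converges to a stable law with unbounded support (here you implicitly need $\varepsilon_n n\to\infty$, which holds in all applications of the lemma, where $\varepsilon_n\asymp \delta_n/\log n$), the factor $\pr(|S_{m_n}|<2c(\varepsilon_n n))$ is bounded by some $q<1$ uniformly in large $n$; counting $K\geq c_1\delta/\varepsilon_n$ blocks then gives the stated exponential bound. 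The only genuinely weak spot is your final paragraph: the regime of bounded $\delta/\varepsilon_n$ cannot really be ``handled by shrinking $C$''. For fixed small $C$ the bound $e^{-C\delta/\varepsilon_n}\approx 1-C\delta/\varepsilon_n$ requires a quantitative lower bound on the exit probability within $\delta n$ steps, uniform in $n\ge n_0$, which your appeal to geometric tails ``for each fixed $n$'' does not supply; and for $\delta n<1$ the left-hand side equals $1$, so the inequality as literally stated (``for all $\delta>0$'') fails no matter what $C$ is. This is a defect of the lemma's formulation rather than of your argument --- in the paper the lemma is only invoked with $\delta=\delta_n$ and $\delta_n/\varepsilon_n\asymp\log n\to\infty$, exactly the regime where your main computation is complete --- but you should either restrict the statement to $\delta/\varepsilon_n$ bounded below (or $n$ large), or replace the last paragraph by an honest estimate such as $\pr(\nu_n\le t\varepsilon_n n)\ge \pr\bigl(|S_{\lfloor t\varepsilon_n n\rfloor}|\ge c(\varepsilon_n n)\bigr)\ge ct$ for $t\in[(\varepsilon_n n)^{-1},M]$, which follows from the stable tail behaviour.
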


\begin{proposition}\label{V_exists}
Assume that $g$ is such that  $g(0)>0$ and 
condition \eqref{g-condition} holds. Then there exists a finite strictly positive limit 
$$
V(g) = \lim_n \e \left[h(S(n)+g(n));T_g>n\right]<\infty.
$$
\end{proposition}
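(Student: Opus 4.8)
My plan is as follows. Existence of $V(g)$ in $(0,\infty]$ and its strict positivity come for free: by Lemma~\ref{lem1} the sequence $\e[h(S_n+g(n));T_g>n]$ is non-decreasing, hence convergent in $(0,\infty]$, and its value at $n=0$ equals $h(g(0))\ge1$ since $S_0=0$, $T_g\ge1$ and $g(0)>0$. So the entire task is to show that this non-decreasing sequence is bounded, i.e.\ that $V(g)<\infty$.

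First I would write it as a telescoping sum of submartingale increments. The computation in the proof of Lemma~\ref{lem1} gives, with $\Delta_k:=g(k+1)-g(k)$,
\[
\e[h(S_n+g(n));T_g>n]=h(g(0))+\sum_{k=0}^{n-1}\e\big[\,(h(S_k+g(k+1))-h(S_k+g(k)))\ind\{T_g>k\}\,\big],
\]
and the point is to show the series converges. Since $X\in\mathcal D(\alpha,\beta)$, the renewal function $h$ is regularly varying of index $\theta:=\alpha(1-\rho)\in(0,1]$, and the standard increment estimate for such renewal functions gives $h(b+\Delta)-h(b)\le C\,(\Delta/b)\,h(b)$ for $b$ large and $0\le\Delta\le b$. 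On $\{T_g>k\}$ one only knows $S_k+g(k)\ge0$, so at face value this is useless near the boundary; the cure, and the reason Lemma~\ref{lem.concentration} is needed, is that the walk conditioned on $\{T_g>k\}$ is \emph{macroscopic}. Lemma~\ref{lem.concentration}, applied on a terminal time window of length of order $k$ so that the relevant norming stays comparable to $c(k)$, should show that the event $\{S_k+g(k)\ge\varepsilon\,c(k)\}$ carries a fixed positive fraction of the mass of $\{T_g>k\}$, and that the complementary ``small'' event $\{T_g>k,\ S_k+g(k)<\varepsilon\,c(k)\}$ has probability at most $\kappa(\varepsilon)\,\pr(T_g>k)$ with $\kappa(\varepsilon)\to0$ as $\varepsilon\to0$. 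On the macroscopic event the increment estimate applies with $b\asymp c(k)$, and since $h(b)/b$ is essentially non-increasing the $k$-th term of the series is at most $C\,\Delta_k\,\frac{h(c(k))}{c(k)}\,\pr(T_g>k)$ plus a correction handled through $\kappa(\varepsilon)$ and the local limit bound $\pr(S_k\in[y,y+1))\le C/c(k)$.

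It then remains to control $\pr(T_g>k)$. Macroscopicity again gives $\e[h(S_k+g(k));T_g>k]\ge c_0\,h(c(k))\,\pr(T_g>k)$, so, by monotonicity of the left-hand side in $k$ and the relation $\pr(T_0>k)\asymp1/h(c(k))$, one obtains $\pr(T_g>k)\le C\,\pr(T_0>k)\,A_n$ for all $k\le n$, where $A_n:=\e[h(S_n+g(n));T_g>n]$. Feeding this into the series and using $\pr(T_0>k)\,h(c(k))/c(k)\asymp1/c(k)$ together with
\[
\sum_{k}\frac{g(k+1)-g(k)}{c(k)}\asymp\int_1^\infty\frac{g(x)}{x\,c(x)}\,dx<\infty,
\]
which is finite because condition \eqref{g-condition} is stronger than the integral test \eqref{new-test}, one arrives at an inequality of the form $A_n\le C_1+\eta\,A_n+B$, where $\eta$ can be made $<\tfrac12$ by taking $\varepsilon$ small and cutting the sum at a large index (the finitely many head terms forming a harmless constant absorbed into $C_1$), and $B$ is a finite bound for the corrections. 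Absorbing the middle term on the left yields $\sup_nA_n<\infty$, i.e.\ $V(g)<\infty$; positivity $V(g)\ge1$ was already noted.

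The main obstacle is precisely the ``macroscopicity'' input used twice above: one must show, with constants uniform in $k$, both that $\{S_k+g(k)\ge\varepsilon\,c(k)\}$ retains a positive proportion of $\{T_g>k\}$ and that the complementary region is negligible as $\varepsilon\to0$. This is what Lemma~\ref{lem.concentration} is designed to deliver; combined with the elementary inclusion $\{T_g>k\}\subseteq\{T_{g(k)}>k\}$ (a constant boundary), which reduces the small-region estimate to classical local limit theorems for a random walk conditioned to stay above a fixed level, it should provide the required uniformity. Once this is in place, the remaining steps — the renewal-function increment bound, the bookkeeping with $c(\cdot)$, and the passage from the series to the integral test — are routine manipulations with regularly varying functions.
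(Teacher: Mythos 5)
Your telescoping identity and the observation that positivity is automatic (the sequence is non-decreasing by Lemma~\ref{lem1} and starts at $h(g(0))>0$) are fine, but the core of your argument rests on an input you have not established and which the tools you cite do not provide. You need, uniformly in $k$, that conditionally on $\{T_g>k\}$ the walk is ``macroscopic'' at the deterministic time $k$: both $\pr\bigl(T_g>k,\ S_k+g(k)<\varepsilon c(k)\bigr)\le\kappa(\varepsilon)\pr(T_g>k)$ with $\kappa(\varepsilon)\to0$, and the consequence $\e[h(S_k+g(k));T_g>k]\ge c_0\,h(c(k))\,\pr(T_g>k)$. Lemma~\ref{lem.concentration} does not give this: it only says that the walk leaves the interval $[-c(\varepsilon_n n),c(\varepsilon_n n)]$ quickly, i.e.\ it controls the stopping time $\nu_n$, not the position at a fixed time conditioned on survival up to that time. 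The reduction via $\{T_g>k\}\subseteq\{T_{g(k)}>k\}$ does not close the gap either: uniform conditional limit theorems for the constant boundary $g(k)=o(c(k))$ yield $\pr\bigl(T_{g(k)}>k,\ S_k+g(k)<\varepsilon c(k)\bigr)\le\kappa(\varepsilon)\,h(g(k))\,\pr(T_0>k)$, and since $h(g(k))\to\infty$ while a priori you only know $\pr(T_g>k)\ge\pr(T_0>k)$, this cannot be dominated by $\kappa(\varepsilon)\pr(T_g>k)$. In fact, tightness of $S_k/c(k)$ away from $0$ under $\pr(\cdot\mid T_g>k)$ is essentially equivalent to $\pr(T_g>k)\asymp\pr(T_0>k)$, i.e.\ to the conclusion of Theorem~\ref{T2} itself; so both your bound on the increments and your estimate $\pr(T_g>k)\le C A_n\pr(T_0>k)$ (hence the absorption step $A_n\le C_1+\eta A_n+B$) are circular at this point.

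The paper's proof avoids exactly this difficulty: it works along the geometric subsequence $n_m=n_0[(1/\delta)^m]$, splits at the event $\{\nu_{n_m}\le n_{m-1}\}$, and uses harmonicity plus the trivial fact that on $\{T_g>\nu_{n_m}\}$ one has $S_{\nu_{n_m}}\ge c(\varepsilon_{n_m}n_m)$ --- so the ``large position'' is obtained at the stopping time $\nu_{n_m}$, where it is free, rather than at a deterministic time conditioned on survival. This produces the recursion $\e[h(S_{n_m}+g(n_m));T_g>n_m]\le(1+h(g(n_m))/h(c(\varepsilon_{n_m}n_m)))\,\e[h(S_{n_{m-1}}+g(n_{m-1}));T_g>n_{m-1}]+O\bigl((h(c(\varepsilon_{n_m}n_m))+h(g(n_m)))n_m^{-3}\bigr)$, and the convergent product requires the test \eqref{g-condition2} with $\varepsilon_n\asymp1/\log n$ --- note that your scheme would only consume the much weaker test \eqref{new-test} (and you invoke \eqref{g-condition}), so it would prove strictly more than the paper does, which is another sign that the missing step is where all the difficulty sits. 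Two secondary issues: the pointwise increment bound $h(b+\Delta)-h(b)\le C(\Delta/b)h(b)$ is a strong-renewal-theorem type estimate and is not valid uniformly in $b$ for $\alpha(1-\rho)\le 1/2$ without extra assumptions (subadditivity only gives $h(b+\Delta)-h(b)\le h(\Delta)$), although a smoothed version using $\pr(S_k\in[y,y+1))\le C/c(k)$ might be salvageable; and the harmonicity identity behind your telescoping needs the same care at the boundary point $S_k+g(k+1)=0$ as in Lemma~\ref{lem1}.
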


\begin{proof}

Since $h(S(n)+g(n)) {\rm 1}\{T_g>n\} $ is a submartingale it is
sufficient to show that 
$$
\sup_n \e \left[h(S(n)+g(n));T_g>n\right]<\infty.
$$
By the well-known submartingale convergence theorem this will imply 
the statement of the proposition. 

Fix some $n_0>1$ and define
$$
n_m=n_0[(1/\delta)^m],\quad m\geq1,
$$
where $[r]$ denotes the integer part of $r$. Since $h(S_n+g(n)){\rm 1}\{T_g>n\}$
is a positive submartingale, it suffices to show that the subsequence
$\mathbf{E}[h(S_{n_m}+g(n_m));T_g>n_m]$ is bounded.
We first split the  expectation into 2 parts,
\begin{align*}
&\e [h(S_{n_m}+g(n_m));T_g>n_m]=E_{1}+E_{2}\\
&\hspace{1cm}:=\e \left[h(S_{n_m}+g(n_m));T_g>n_m,\nu_{n_m}\leq n_{m-1}\right]\\
&\hspace{2cm}+\e\left[h(S_{n_m}+g(n_m));T_g>n_m,  \nu_{n_m}> n_{m-1}\right].
\end{align*}
For fixed $n$ let 
$$ 
\widetilde \tau (g(n)):=\min \{k\ge 1: S_k+g(n) \le 0\}.
$$
For the second expectation note 
\begin{align*}
  E_2 =  \e\left[ \e\left[h(S_{n_m}+g(n_m))  {\rm 1}\{T_g>n_m\} \mid \mathcal
    F_{n_{m-1}}\right]{\rm 1}\{\nu_{n_m}> n_{m-1}\}\right].
\end{align*}
Then, using the harmonicity of $h$, 
\begin{align*}
&\e\left[h(S_{n_m}+g(n_m)){\rm 1}\{T_g>n_m\} \mid \mathcal F_{n_{m-1}}\right]\\
&\hspace{1cm}\le \e_{S_{n_{m-1}}} [h(S_{n_m-n_{m-1}}+g(n_m)){\rm 1}\{\widetilde\tau(g(n_m))>n_m-n_{m-1}\}]{\rm 1}\{T_g>n_{m-1}\}\\
&\hspace{1cm}\le h(S_{n_{m-1}}+g(n_m))\\
&\hspace{1cm}\le h(S_{n_{m-1}}+g(n_{m-1})) +h(g(n_m)),
\end{align*}
where we used the subadditivity of the renewal function in the last
step. Therefore,
\begin{align*}
E_2 &\le \e [h(S_{n_{m-1}})+g(n_{m-1})) +h(g(n_m));\nu_{n_m}>n_{m-1} ]\\
&\le  \e\left[h(c(\varepsilon_{n_m} n_m)+g(n_m))+h(g(n_m));\nu_{n_m}> n_{m-1}\right]\\
&\le \left(h(c(\varepsilon_{n_m} n_m))+2h(g(n_m))\right)\pr(\nu_{n_m}> n_{m-1}).
\end{align*}
Applying Lemma \ref{lem1} with $\varepsilon_n:=(3\delta C\log n)^{-1}$, we obtain
\begin{align*}
E_2\leq \left(h(c(\varepsilon_{n_m} n_m))+2h(g(n_m))\right)n_m^{-3}.
\end{align*}

The first expectation can be estimated similarly. First note 
\begin{align*}
  E_1 =  \e\left[ \e\left[h(S_{n_m}+g(n_m))  {\rm 1}\{T_g>n_m\} \mid \mathcal
    F_{\nu_{n_m}}\right], \nu_{n_m}\le n_{m-1}\right].
\end{align*}
Then,  again, using the harmonicity of $h$, 
\begin{align*}
&\e\left[h(S_{n_m}+g(n_m)){\rm 1}\{T_g>n_m\} \mid \mathcal F_{\nu_{n_m}}\right]\\
&\hspace{1cm}\le \e_{S_{\nu_{n_m}}} [h(S_{n_m-\nu_{n_m}}+g(n_m))
{\rm 1}\{\widetilde\tau(g(n_m))>n_m-\nu_{n_m}\}]{\rm 1}\{T_g>\nu_{n_m}\}\\
&\hspace{1cm}\le h(S_{\nu_{n_m}}+g(n_m)){\rm 1}\{T_g>\nu_{n_m}\}\\
&\hspace{1cm}\le \left(h(S_{\nu_{n_m}}+g(\nu_{n_m})) +h(g(n_m))\right){\rm 1}\{T_g>\nu_{n_m}\}.
\end{align*}
Now since $S_{\nu_n}\ge c(\varepsilon_n n)$ we have, 
\begin{align*}
  &\e\left[h(S_{n_m}+g(n_m))  {\rm 1}\{T_g>n_m\} \mid \mathcal
    F_{\nu_{n_m}}\right]\\
 &\hspace{1cm}\le h(S_{\nu_{n_m}}+g(\nu_{n_m})) \left(
    1+\frac{h(g(n_m))}{h(c(\varepsilon_{n_m} n_m))}\right){\rm 1}\{T_g>\nu_{n_m}\}.
\end{align*}
Hence, using the latter inequality and using 
the submartingale property we have, 
\begin{align*}
  E_1 &\le \left(
    1+\frac{h(g(n_m))}{h(c(\varepsilon_{n_m} n_m))}  \right) 
  \e [h(S_{\nu_{n_m}}+g(\nu_{n_m}));T_g>\nu_{n_m},\nu_{n_m}\le n_{m-1}]\\
&\le 
\left(
    1+\frac{h(g(n_m))}{h(c(\varepsilon_{n_m} n_m))}  \right) 
 \e [h(S_{n_{m-1}}+g(n_{m-1}));T_g> n_{m-1}].
\end{align*}
As a result we have 
\begin{multline*}
\e [h(S_{n_m}+g(n_m));T_g>n_m]\\
\le \left(1+\frac{h(g(n_m))}{h(c(\varepsilon_{n_m} n_m))}\right) 
 \e [h(S_{n_{m-1}}+g(n_{m-1}));T_g> n_{m-1}]\\
+\left(h(c(\varepsilon_{n_{m}} n_{m}))+2h(g(n_m))\right) / n_m^3.
\end{multline*}
Iterating this procedure $m$ times we obtain 
\begin{align*}
  &\e [h(S_{n_m}+g(n_m));T_g>n_m]  \le 
\prod_{j=1}^m \left(
    1+\frac{h(g(n_j))}{h(c(\varepsilon_{n_j} n_j))}  \right) \\
&\hspace{0.5cm}\times
\left( \e [h(S_{n_0}+g(n_0));T_g>n_0]  + \sum_{j=1}^m\left(h(c(\varepsilon_{n_j} n_j))+2h(g(n_{j}))\right) / n_j^3\right).
\end{align*}
Now note \eqref{g-condition2} implies that
$$
\sum_{j=1}^\infty \frac{h(g(n_j))}{h(c(\varepsilon_{n_j} n_j))} <\infty. 
$$ 
Therefore, the product
$$
\prod_{j=1}^\infty \left(1+\frac{h(g(n_j))}{h(c(\varepsilon_{n_j} n_j))}  \right) 
$$
is finite. Furthermore, recalling that the index of $h$ is $\alpha(1-\rho)$, we conclude that
$$
\sum_{j=0}^\infty \frac{(h(c(\varepsilon_{n_j} n_j))+2h(g(n_{j}))}{n_j^3}<\infty.
$$

Then the statement of the proposition immediately follows.

\end{proof}

\subsection{Derivation of the asymptotics}

In the proof of \eqref{T1.1} we will require the following result.
\begin{lemma}\label{lem2}
Let $S_n$ be asymptotically stable random walk. Then, 
\begin{equation}\label{lem2.eq1}
\pr(T_x>n)\sim h(x)n^{\rho-1}L(n)
\end{equation}
uniformly in $x$ such that $x/c(n)\to 0$. 
In  addition the following estimate is valid 
for all $x\ge 0$,
\begin{equation}\label{lem2.eq2}
\pr(T_x>n)\le  Ch(x)n^{\rho-1}L(n).
\end{equation}
\end{lemma}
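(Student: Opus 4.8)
## Proof proposal for Lemma \ref{lem2}

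The plan is to derive \eqref{lem2.eq1} from the known constant-boundary asymptotics \eqref{eq2} together with a uniformity argument, and then to obtain the bound \eqref{lem2.eq2} by a separate comparison with the zero-boundary case. The starting point for the uniform statement is the decomposition of $\{T_x>n\}$ according to a well-chosen intermediate time. Fix a slowly growing $m=m(n)$ (for instance $m=\lfloor n/\log n\rfloor$, or any $m$ with $m\to\infty$, $m/n\to 0$) and write, using the Markov property at time $m$,
\begin{equation*}
\pr(T_x>n)=\mathbf{E}\left[\pr_{S_m}(T_0>n-m);T_x>m\right],
\end{equation*}
where $\pr_y(T_0>\cdot)$ denotes the exit probability started from $y\ge 0$. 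On the event $\{T_x>m\}$ we have $S_m\ge -x$, and when $x/c(n)\to 0$ the contribution of the walk is, after rescaling by $c(n)$, concentrated near the stable law; the point is that $x$ is negligible compared with the natural spatial scale $c(n-m)\sim c(n)$. Applying \eqref{eq2} to the inner probability (with the shifted starting point $S_m+x$, which lies in $[0,\infty)$) and using that $h(y)=h(y)$ is regularly varying of index $\alpha(1-\rho)$ together with $h(S_m+x)/h(c(n))$ being uniformly integrable on $\{T_x>m\}$, one gets
\begin{equation*}
\pr(T_x>n)\sim (n-m)^{\rho-1}L(n-m)\,\mathbf{E}\left[h(S_m+x);T_x>m\right].
\end{equation*}
Here $(n-m)^{\rho-1}L(n-m)\sim n^{\rho-1}L(n)$ since $m=o(n)$. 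It remains to show $\mathbf{E}[h(S_m+x);T_x>m]\to h(x)$, or rather that it is asymptotically $h(x)$ uniformly in the relevant range of $x$; but $h(S_n+x)\mathbf{1}\{T_x>n\}$ is exactly the submartingale from Lemma \ref{lem1} (in the constant-boundary case $g\equiv x$), its expectation equals $h(x)$ for $n=0$ and is nondecreasing, and by the optional-stopping/harmonicity identity it in fact stays bounded and converges; the subtlety is only to make the convergence uniform for $x/c(n)\to 0$, which follows because the overshoot structure does not depend on $x$ and $h(S_m+x)-h(S_m)\le h(x)$ by subadditivity, so the $x$-dependence is controlled by an additive error $h(x)=o(h(c(n)))$.

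For the global bound \eqref{lem2.eq2}, the clean route is to compare $T_x$ with $T_0$ directly. Since $h$ is harmonic for the walk killed on leaving $[0,\infty)$, the process $h(S_n+x)\mathbf{1}\{T_x>n\}$ is a submartingale (Lemma \ref{lem1}), but more useful is its reverse: applying the optional stopping theorem together with the fact that $h(S_n+x)\ge h(S_n)\mathbf{1}\{S_n\ge 0\}$ when $S_n+x\ge 0$ is not quite what we want, so instead I would invoke the standard estimate that $\pr(T_x>n)$ is comparable to $h(x)\pr(T_0>n)$ up to a universal constant — this is classical for asymptotically stable walks and can be proved by the same intermediate-time decomposition as above without any smallness assumption on $x$: split at time $m$, bound the inner probability $\pr_y(T_0>n-m)\le C h(y)(n-m)^{\rho-1}L(n-m)$ by \eqref{eq2} (which holds with a uniform constant once $n-m$ is large, and the finitely many small values are absorbed), and then use $\mathbf{E}[h(S_m+x);T_x>m]\le h(x)+\mathbf{E}[h(S_m);T_0>m]\le C h(x)$, the last step again by subadditivity of $h$ and boundedness of $\mathbf{E}[h(S_m);T_0>m]=h(0)=1$ for all $m$. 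Combining, $\pr(T_x>n)\le C h(x)(n-m)^{\rho-1}L(n-m)\le C h(x)n^{\rho-1}L(n)$.

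The main obstacle, and the step deserving the most care, is the uniformity in \eqref{lem2.eq1}: one must show that the error terms in the local limit / renewal-type estimates used for the inner probability $\pr_{S_m}(T_0>n-m)$ are uniform over starting points of order $o(c(n))$, and that the uniform integrability of $h(S_m+x)/h(c(n))$ on $\{T_x>m\}$ holds uniformly in $x$. I expect this to follow from the regular variation of $h$ (index $\alpha(1-\rho)\in(0,1)$, so $h$ grows sublinearly and $h(y+x)\le h(y)+h(x)$), from the known uniform version of \eqref{eq2} on compact ratio ranges $y/c(n)$ bounded, and from a truncation at level $S_m\le \eta c(n)$ whose complement is handled by Lemma \ref{lem2.eq2} itself applied at the smaller scale. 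In other words, \eqref{lem2.eq2} and \eqref{lem2.eq1} are proved together, the bound feeding the uniformity in the asymptotic equivalence.
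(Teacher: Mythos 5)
Your argument has a genuine gap, and it is most visible in your proof of \eqref{lem2.eq2}. There you bound the inner probability by ``$\pr_y(T_0>n-m)\le C h(y)(n-m)^{\rho-1}L(n-m)$ by \eqref{eq2}, which holds with a uniform constant once $n-m$ is large''. But $\pr_y(T_0>k)=\pr(T_y>k)$, so the claim that \eqref{eq2} holds with a constant uniform in the starting point is exactly the statement \eqref{lem2.eq2} you are trying to prove; \eqref{eq2} as such is only a fixed-$x$ asymptotic, and upgrading it to a bound uniform in $x$ is the entire content of the lemma. (There is also a smaller slip: $\e[h(S_m+x);T_x>m]\le h(x)+\e[h(S_m);T_0>m]$ is not justified, since $\{T_x>m\}\supset\{T_0>m\}$, so the inequality between the two expectations goes the wrong way; this particular point is repairable because for constant boundary $h(S_n+x)\ind\{T_x>n\}$ is in fact a martingale, so $\e[h(S_m+x);T_x>m]=h(x)$.) The paper avoids any such circularity by proving \eqref{lem2.eq2} directly from the ladder representation: with $\sigma(x)=\min\{k:\chi_1+\cdots+\chi_k>x\}$ one has $T_x\le\sum_{k\le\sigma(x)}\tau_k$, then Markov's inequality applied to $\sum_{k\le\sigma(x)}(\tau_k\wedge n)$, Wald's identity $\e\sigma(x)=h(x)$, and the Tauberian estimate $\e[T_0\wedge n]\sim\rho^{-1}n\pr(T_0>n)$ give $\pr(T_x>n)\le(1+o(1))\rho^{-1}h(x)\pr(T_0>n)$ uniformly in $x$ — a short, self-contained argument you could adopt.

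The same circularity affects your derivation of \eqref{lem2.eq1}: the step $\pr(T_x>n)\sim(n-m)^{\rho-1}L(n-m)\,\e[h(S_m+x);T_x>m]$ needs the asymptotic $\pr_y(T_0>n-m)\sim h(y)(n-m)^{\rho-1}L(n-m)$ uniformly over starting points $y$ up to $o(c(n))$ (which is again the statement being proved), together with a proof that the contribution of $\{S_m\gtrsim c(n)\}$, where that asymptotic fails outright, is negligible \emph{uniformly in} $x=o(c(n))$. You acknowledge this but only gesture at it via ``uniform integrability'' and a truncation handled by \eqref{lem2.eq2} at a smaller scale; making that precise essentially requires the big-jump/truncated-second-moment argument the paper carries out later (in the proof of Theorem \ref{T2}, around \eqref{tail.expectation}), not a one-line remark. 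The paper itself does not reprove the uniform asymptotic at all: \eqref{lem2.eq1} is quoted from Corollary~3 of Doney \cite{D12}. So as written, your proposal either assumes the lemma or defers its hardest part; to complete it you would need to either cite the Doney result, or supply the tail-expectation estimate in full.
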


\begin{proof}

The first statement \eqref{lem2.eq1} is Corollary~3 of \cite{D12}.  

Denote 
$$
\sigma(x):=\min\left\{k\ge 1: \sum_{i=1}^k\chi_i>x\right\}.
$$
Then,
\begin{align*}
\pr(T_x>n)&\le \pr\left(\sum_{k=1}^{\sigma(x)} \tau_k>n\right)
\le \pr\left(\sum_{k=1}^{\sigma(x)} \tau_k\wedge n >n\right)\\
&\le \frac{\e \left[ \sum_{k=1}^{\sigma(x)} \tau_k\wedge n \right]}{n} 
=\frac{\e \sigma(x) \e[T_0\wedge n]}{n}, 
\end{align*}
where we applied the Wald's identity in the last step.  Since the
tail of $T_0$ is regularly varying of order $\rho-1 \in (0,1)$,
by the Tauberian theorem, $\e[T_0\wedge n] \sim \rho^{-1} n \pr(\tau>n)$.
Also note that  $h(x)=\e \sigma(x)$. Hence, as $n\to\infty$, uniformly in $x$, 
$$
\pr(T_x>n) \le (1+o(1))h(x)\rho^{-1}\pr(T_0>n).
$$
\end{proof}

According to \eqref{g-condition2},
$$
\int_x^{2x}\frac{h(g(y))}{yh(c(y/\log y))}dy\to0\quad\text{as }x\to\infty.
$$
Since functions $c(x)$ $h(x)$, $g(x)$ are increasing and $h$, $c$ are
in addition regularly varying, we conclude that
$$
\frac{h(g(x))}{h(c(x/\log x)}\to0.
$$
In other words,
$$
g(x)=o(c(x/\log x)).
$$
Consequently, there exists a sequence $\delta_n\to0$ such that
$$
g(n)=o(c(\delta_n n/\log n)).
$$
Moreover, we may asume that $\delta_n$ is such that slowly  varying
functions in $\pr(\tau>n)\sim n^{\rho-1}L(n)$ and 
$h(x)=x^{\alpha(1-\rho)}l(x)$ are such that $l(\delta_n n)\sim l(n)$ 
and $L(\delta_n n )\sim L(n)$.

Applying Lemma \ref{lem.concentration} with $\delta=\delta_n$ and
$\varepsilon_n=\delta_n/(2C\log n)$, we get
\begin{equation}
\label{1.split}
\pr(T_g>n) = \pr(T_g>n,\nu_n <n\delta_n) +o(n^{-2}). 
\end{equation}

This immediately gives us the bound from above. Indeed, 
\begin{align*}
 &\pr(T_g>n,\nu_n\le n\delta_n ) \\
&\le \int_{\varepsilon c_{n\delta_n}}^\infty \pr (T_g>\nu_n, S_{\nu_n}
\in dy,\nu_n\le n\delta_n )\pr(\widetilde \tau_{y+g(n)}>n-n\delta_n)\\
&\le 
C\int_{\varepsilon c_{n\delta_n}}^\infty \pr (T_g>\nu_n, S_{\nu_n}
\in dy,\nu_n\le n\delta_n )h(y+g(n)) 
(n-n\delta_n)^{\rho-1}L(n-n\delta_n),
\end{align*}
where we applied \eqref{lem2.eq2} in the last step.
Now note that, uniformly in $y>c(\delta_n n/\log n)$, 
$h(y+g(n)) \sim h(y)$ and
$(n-n\delta_n)^{\rho-1}L(n-n\delta_n)\sim n^{\rho-1}L(n)$.
Therefore, for large $n$ we have, 
\begin{align*}
 \pr(T_g>n,\nu_n <n \delta_n)  &\le 
C \int_{\varepsilon c_{n\delta_n}}^\infty \pr (T_g>\nu_n, S_{\nu_n}
\in dy,\nu_n\le n\delta_n ) h(y) \pr(T_0>n)\\
&\le C \e [h(S_{\nu_n});T_g>\nu_n,\nu_n\leq n\delta_n] \pr(T_0>n).
\end{align*}
By the submartingale property and Proposition~\ref{V_exists},
\begin{equation}\label{new_UB}
\e [h(S_{\nu_n});T_g>\nu_n,\nu_n\leq n\delta_n]
\leq \e [h(S_{n\delta_n});T_g>n\delta_n]\leq V(g).
\end{equation}
Consequently,
\begin{align*}
\pr(T_g>n,\nu_n <n \delta_n)\le C V(g) \pr(T_0>n).
\end{align*}
Combining this with \eqref{1.split}, we get
\begin{equation}
\label{UpperBound}
\mathbf{P}(T_g>n)\leq CV(g)\mathbf{P}(T_0>n).
\end{equation}

Next we are going to
improve the latter bound to obtain the sharp asymptotics. 
Let $A_n\uparrow \infty$ be such that $A_nc(\delta_n n/\log n)=o(c(n)).$ 
Set, for brevity, $r_n:=c(\delta_n n/\log n)$. We split the probability
$\pr(T_g>n,\nu_n\leq \delta_n n)$ into two parts:
\begin{align}
\label{split}
\nonumber
  \pr(T_g>n,\nu_n\leq \delta_n n) 
= &\pr\left(T_g>n,\nu_n <\delta n, S_{\nu_n}\in (r_n, A_n r_n)\right) \\
&+\pr(T_g>n,\nu_n <\delta n, S_{\nu_n}> A_n r_n).
\end{align}
For the first term we are gong to apply \eqref{lem2.eq1}.  We have, 
\begin{align}
\label{up.bound}
\nonumber
&\pr\left(T_g>n,\nu_n <\delta n, S_{\nu_n}\in (r_n, A_nr_n)\right)\\
\nonumber
&\hspace{1cm}\le \int_{r_n}^{A_n r_n} \pr (T_g>\nu_n, S_{\nu_n}
\in dy,\nu_n\le n\delta_n )\pr(\widetilde \tau_{y+g(n)}>n-n\delta_n)\\
\nonumber
&\hspace{1cm}\sim\int_{r_n}^{A_n r_n}\pr(T_g>\nu_n, S_{\nu_n}\in dy,\nu_n\le n\delta_n )
h(y)\pr\{T_0>n\}\\
\nonumber
&\hspace{1cm}\le \e [h(S_{\nu_n});T_g>\nu_n,\nu_n\leq n\delta_n] \pr\{T_0>n\}\\
&\hspace{1cm}\leq V(g)\pr\{T_0>n\},
\end{align}
where we used \eqref{new_UB} in the last step. The bound
from below can be obtained by similar arguments:
\begin{align}
\label{l.bound}
\nonumber
&\pr\left(T_g>n,\nu_n <\delta n, S_{\nu_n}\in (r_n, A_nr_n)\right)\\
\nonumber
&\ge \int_{r_n}^{A_n r_n} \pr (T_g>\nu_n, S_{\nu_n}
\in dy,\nu_n\le n\delta_n )\pr(\widetilde \tau_{y}>n-n\delta_n)\\
\nonumber
&\sim\int_{r_n}^{A_n r_n}\pr(T_g>\nu_n, S_{\nu_n}\in dy,\nu_n\le n\delta_n )
h(y)\pr(T_0>n)\\
&= \e [h(S_{\nu_n});T_g>\nu_n, S_{\nu_n}<A_nr_n, \nu_n\le n\delta_n] \pr(T_0>n).
\end{align}

Now we turn to the second term in \eqref{split}.
Using the Markov property and \eqref{lem2.eq1}, we obtain
\begin{multline*}
  \pr(T_g>n,\nu_n <\delta_n n, S_{\nu_n}> A_n r_n,T_g>\nu_n) \\
\le C \e [h(S_{\nu_n});S_{\nu_n}> A_n r_n,\nu_n <\delta_n n,T_g>\nu_n] \pr(T_0>n).
\end{multline*}
So we are left to prove that 
\begin{equation}
\label{tail.expectation}
\e [h(S_{\nu_n});S_{\nu_n}> A_n r_n,\nu_n <\delta_n n,T_g>\nu_n] \to 0. 
\end{equation}
Now  note that $S_{\nu_n-1}\le r_n $. Then on
the event $\{S_{\nu_n}> A_n r_n\}$ we have 
$$
X_{\nu_n}>(A_n-1) r_n >A_n r_n/2>S_{\nu_n-1}.
$$ 
Hence 
$$S_{\nu_n} = X_{\nu_n} + S_{\nu_n-1} \le 2 X_{\nu_n}.$$
Using the subadditivity and monotone increase of $h$ we obtain 
$$
h(S_{\nu_n})\le h(2 X_{\nu_n})\le 2 h(X_{\nu_n}).
$$
This implies that
\begin{align*}
 &E:=\e [h(S_{\nu_n});S_{\nu_n}> A_n r_n,\nu_n <\delta_n n,T_g>\nu_n] \\
&\hspace{2cm}\le  2 \e [h(X_{\nu_n});X_{\nu_n}> (A_n /2)r_n,\nu_n <\delta_n n,T_g>\nu_n]\\
&\hspace{2cm}\le\sum_{k=1}^{\delta_n n} \e [h(X_{k});X_{k}> (A_n /2)r_n,\nu_n =k,T_g>k]\\ 
&\hspace{2cm}\le 2 \sum_{k=1}^{\delta_n n}  \pr(T_g\geq k)\e [h(X);X> (A_n /2)c_{n\delta_n}]
\end{align*}
Applying \eqref{UpperBound}, we obtain
\begin{align*}
E &\le C \sum_{k=1}^{\delta_n n}\pr(T_0\geq k)\e [h(X);X> (A_n/2)r_n]\\
&\le C \delta_n n \pr(T_0>\delta_n n)\e [h(X);X> (A_n/2)r_n].
\end{align*}
Recall that if $X\in\mathcal{D}(\alpha,\beta)$ then the distribution function
of $\theta(du):=u^2\pr(|X|\in du)$ is regularly varying with index $2-\alpha$,
that is, 
$$
\Theta(x):=\theta((0,x))=x^{2-\alpha}\ell(x).
$$
Therefore,
\begin{align*}
\e [h(X);X> (A_n/2)r_n]\leq \int_{(A_n/2)r_n}^\infty\frac{h(x)}{x^2}\theta(dx)
\leq C\frac{h(A_n r_n)}{(A_nr_n)^2}\Theta(A_nr_n).
\end{align*}
We can choose $A_n$ in such a way that
$$
\frac{h(A_n r_n)}{(A_nr_n)^2}\Theta(A_nr_n)
\sim\left(\frac{c(n)}{A_nr_n}\right)^{\alpha\rho} \frac{h(c(n))}{c^2(n)}\Theta(c(n)).
$$
Consequently,
\begin{align*}
  E\le C \delta_n^\rho n \pr(T_0>n) \left(\frac{c(n)}{A_nr_n}\right)^{\alpha\rho} \frac{h(c(n))}{c^2(n)}\Theta(c(n)).
\end{align*}
Now recall that 
$$
h(c_n)\pr(T_0>n)\to C_0,\quad n \frac{\Theta(c(n)}{c^2(n)} \to 1
$$
for some constant $C_0$. Hence, for an appropriate choice of $A_n$,
$$
E\le C \delta_n^\rho \left(\frac{c(n)}{A_nr_n}\right)^{\alpha\rho}\to0
$$
as $n\to \infty$. This completes the proof of \eqref{tail.expectation}.

Noting that \eqref{tail.expectation} yields 
$$
\pr(T_g>n,\nu_n <\delta_n n, S_{\nu_n}> A_n r_n,T_g>\nu_n)
=o(\pr(T_0>n))
$$
and taking into account \eqref{up.bound}, we get
\begin{equation}
\label{up.bound2}
 \limsup_{n\to\infty}\frac{\mathbf{P}(T_g>n)}{\mathbf{P}(T_0>n)}\leq V(g).
\end{equation}

Combining \eqref{l.bound} and \eqref{tail.expectation}, we have
\begin{equation}
\label{l.bound2}
 \liminf_{n\to\infty}\frac{\mathbf{P}(T_g>n)}{\mathbf{P}(T_0>n)}\geq 
\liminf_{n\to\infty}\e [h(S_{\nu_n});T_g>\nu_n,\nu_n\le n\delta_n].
\end{equation}

It follows from Lemma \ref{lem.concentration} with $\delta=\delta_n$ and
$\varepsilon_n=\delta_n/(2C\log n)$ that
$$
\e[h(S_{\delta_n n}+g(\delta_n n));T_g>\delta_n n,\nu_n> n\delta_n]\leq 
g(c(n))\mathbf{P}(\nu_n> n\delta_n)\to 0.
$$
Consequently,
$$
\e [h(S_{\nu_n});T_g>\nu_n,\nu_n\le n\delta_n]=
\e [h(S_{\theta_n}+g(\theta_n));T_g>\theta_n]+o(1),
$$
where $\theta_n:=\nu_n\wedge \delta_nn$.
Applying the optional stopping theorem to the submartingale
$h(S_n+g(n)){\rm 1}\{T_g>n\}$, we get, for every fixed $N$,
$$
\e [h(S_{\theta_n}+g(\theta_n));T_g>\theta_n]\geq 
\e [h(S_{\theta_n\wedge N}+g(\theta_n\wedge N));T_g>\theta_n\wedge N].
$$
Note also that
\begin{align*}
&\Big|\e[h(S_{\theta_n\wedge N}+g(\theta_n\wedge N));T_g>\theta_n\wedge N,\nu_n<N]\\
&\hspace{3cm}-\e[h(S_N+g(N));T_g>N,\nu_n<N]\Big|\\
&\hspace{2cm}\leq \e[h(\max_{k<N}|S_k|+g(N));\max_{k<N}|S_k|>c(\delta_n n/\log n)]\to0.
\end{align*}
As a result,
$$
\e [h(S_{\theta_n}+g(\theta_n));T_g>\theta_n]\geq \e [h(S_{N}+g(N));T_g>N]+o(1)
$$
and, consequently,
$$
\liminf_{n\to\infty}\e [h(S_{\nu_n});T_g>\nu_n,\nu_n\le n\delta_n]
\geq \e [h(S_{N}+g(N));T_g>N].
$$
Letting $N\to\infty$ we get
$$
\liminf_{n\to\infty}\e [h(S_{\nu_n});T_g>\nu_n,\nu_n\le n\delta_n]
\geq V(g).
$$
Combining this with \eqref{up.bound2}, we complete the proof of
Theorem \ref{T2}.

\section{Proof of Theorem \ref{T3}}
The proof follows closely the proof of Theorem~\ref{T2}. 
\subsection{Positivity of $\widehat V(g)$}
\begin{lemma}\label{hat.lem1}
The sequence $Y_n = h(S_n-g(n)) 1\{\widehat T_g>n\}$ is a supermartingale. 
\end{lemma}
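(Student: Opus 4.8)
The plan is to mirror, essentially line for line, the proof of Lemma~\ref{lem1}; the only change is that the monotonicity of $g$ now acts in the supermartingale direction. Writing $\mathcal F_n=\sigma(X_1,\dots,X_n)$ and using ${\rm 1}\{\widehat T_g>n+1\}={\rm 1}\{\widehat T_g>n\}-{\rm 1}\{\widehat T_g=n+1\}$, I would decompose
\begin{align*}
\e[Y_{n+1}-Y_n\mid\mathcal F_n]
&=\e\big[\big(h(S_{n+1}-g(n+1))-h(S_n-g(n))\big){\rm 1}\{\widehat T_g>n\}\mid\mathcal F_n\big]\\
&\qquad-\e\big[h(S_{n+1}-g(n+1)){\rm 1}\{\widehat T_g=n+1\}\mid\mathcal F_n\big].
\end{align*}
On $\{\widehat T_g=n+1\}$ we have $S_{n+1}<g(n+1)$, hence $S_{n+1}-g(n+1)<0$ and $h(S_{n+1}-g(n+1))=0$; so the second term vanishes, just as the corresponding term in Lemma~\ref{lem1}.

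For the first term, since ${\rm 1}\{\widehat T_g>n\}$ is $\mathcal F_n$-measurable and $X_{n+1}$ is independent of $\mathcal F_n$, it equals
$$
{\rm 1}\{\widehat T_g>n\}\Big(\e\big[h\big((S_n-g(n+1))+X_{n+1}\big)\mid\mathcal F_n\big]-h(S_n-g(n))\Big).
$$
On $\{\widehat T_g>n\}$ one has $S_n\ge g(n)>0$, so $S_n-g(n)\ge0$; moreover $g(n+1)\ge g(n)$ since $g$ is increasing, and $h$ is non-decreasing and vanishes on the negative half-line, so pointwise $h((S_n-g(n+1))+X_{n+1})\le h((S_n-g(n))+X_{n+1})$. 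Taking conditional expectations and invoking the harmonicity of $h$ at the non-negative point $S_n-g(n)$ (that is, $\e\,h(x+X)=h(x)$ for $x\ge0$, exactly as used in Lemma~\ref{lem1}), we obtain, on $\{\widehat T_g>n\}$,
$$
\e\big[h\big((S_n-g(n+1))+X_{n+1}\big)\mid\mathcal F_n\big]\le h(S_n-g(n)).
$$
Hence $\e[Y_{n+1}-Y_n\mid\mathcal F_n]\le0$, which is the claim.

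I do not expect a real obstacle here: the statement is the exact analogue of Lemma~\ref{lem1}, and the single inequality $h(S_n-g(n+1))\le h(S_n-g(n))$ — the reverse of the one used there — is precisely what converts the submartingale into a supermartingale. The only delicate point is, as in Lemma~\ref{lem1}, the application of the harmonic identity at the boundary value $S_n-g(n)=0$ (i.e.\ when $S_n=g(n)$), which is covered by the same convention that $h$ is extended by $0$ to the negative half-line.
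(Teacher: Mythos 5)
Your proof is correct and follows essentially the same route as the paper's own argument, which mirrors Lemma~\ref{lem1}: the killed term vanishes because $h=0$ on $(-\infty,0]$, and the drift term is nonpositive by monotonicity of $h$ and $g$ together with harmonicity. In fact your ordering of the two steps (first the pointwise bound $h(S_n-g(n+1)+X_{n+1})\le h(S_n-g(n)+X_{n+1})$, then harmonicity at the point $S_n-g(n)\ge 0$) is the more careful one, since the paper applies the harmonic identity directly at $S_n-g(n+1)$, which on $\{\widehat T_g>n\}$ need not be nonnegative.
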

\begin{proof}
Clearly,
\begin{align*}
\e\left[Y_{n+1}-Y_n|\mathcal{F}_n\right]
&=\e\left[\left(h(S_{n+1}-g(n+1))-h(S_{n}-g(n))\right){\rm 1}\{\widehat T_g>n\}|\mathcal{F}_n\right]\\
&\hspace{2cm}-\e\left[h(S_{n+1}-g(n+1)){\rm 1}\{\widehat  T_g=n+1\}|\mathcal{F}_n\right].
\end{align*}
Now note that $h(S_{n+1}-g(n+1)){\rm 1}\{\widehat  T_g=n+1\} =0$ since 
$h(x)=0$ for $x\le 0$. Next, by harmonicity of $h$ 
\begin{align*}
&\e\left[Y_{n+1}-Y_n|\mathcal{F}_n\right]\\
&\hspace{1cm}=\e\left[\left(h(S_{n+1}-g(n+1))\right){\rm 1}\{\widehat
  T_g>n\}|\mathcal{F}_n\right]
-h(S_{n}-g(n)) {\rm 1}\{\widehat  T_g>n\}\\
&\hspace{1cm}= h(S_{n}-g(n+1)) {\rm 1}\{\widehat  T_g>n\} - 
h(S_{n}-g(n)) {\rm 1}\{\widehat  T_g>n\} \le 0,
\end{align*}
since $h$ and $g$ are monotone increasing.
\end{proof}
By this supermartingale property, the limit
$\lim_{n\to\infty}\mathbf{E}[h(S_n-g(n);\widehat{T}_g>n]$ exists and is finite.
Thus, we need to show that this limit is positive.
\begin{proposition}
Assume that $g$ is such that  $g(0)>0$ and 
condition \eqref{g-condition} holds. Then there exists a
strictly positive and finite  limit 
$$
\widehat  V(g):= \lim_{n\to\infty} \e \left[h(S(n)-g(n));\widehat  T_g>n\right].
$$
\end{proposition}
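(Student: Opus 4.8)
The existence and finiteness of $\widehat V(g)$ are already contained in Lemma~\ref{hat.lem1}: $Y_n:=h(S_n-g(n)){\rm 1}\{\widehat T_g>n\}$ is a nonnegative supermartingale for $n\ge1$ — the computation is legitimate for $n\ge1$ because on $\{\widehat T_g>n\}$ one has $S_n\ge g(n)>0$, so harmonicity applies when $S_n>g(n+1)$, while if $g(n)\le S_n\le g(n+1)$ one uses $\e[h(S_n+X-g(n+1));S_n+X>g(n+1)\mid\mathcal F_n]\le\e[h(X);X>0]=h(0)=1\le h(S_n-g(n))$. Hence $\e Y_n\downarrow\widehat V(g)\in[0,\e Y_1]$ with $\e Y_1<\infty$, so only $\widehat V(g)>0$ has to be shown, i.e. $\e Y_n$ has to be kept away from $0$ uniformly for all large $n$.

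The plan is to reuse the escape-time decomposition from the proof of Theorem~\ref{T2}. Pick $\delta_n\downarrow0$ so slowly that $g(n)=o(c(\delta_n n/\log n))$ (possible since \eqref{g-condition2} forces $g(x)=o(c(x/\log x))$), put $\varepsilon_n:=\delta_n/(KC\log n)$ with $K$ large enough that $n^{-KC^2}=o(1/h(c(\varepsilon_n n)))$, write $L_n:=c(\varepsilon_n n)$, and let $\nu_n:=\min\{k\ge1:|S_k|\ge L_n\}$ as in Lemma~\ref{lem.concentration}. On the event $\{\widehat T_g>\nu_n,\ \nu_n<\delta_n n\}$ one has $S_{\nu_n}\ge g(\nu_n)>0$, hence $S_{\nu_n}\ge L_n>g(n)$; the Markov property at $\nu_n$, the bound $g(\nu_n+j)\le g(n)$ for $1\le j\le n-\nu_n$, the martingale identity $\e_y[h(S_m);T_0>m]=h(y)$ ($y\ge0$), and a shift of the walk by $g(n)$ together give $\e[Y_n\mid\mathcal F_{\nu_n}]\ge h(S_{\nu_n}-g(n))\ge h(L_n-g(n))$ on that event. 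Since $g(n)=o(L_n)$ and $h$ is regularly varying, $h(L_n-g(n))\sim h(L_n)$, so
\begin{equation*}
\e Y_n\ \ge\ (1+o(1))\,h(L_n)\,\pr(\widehat T_g>\nu_n,\ \nu_n<\delta_n n).
\end{equation*}
Because Lemma~\ref{lem.concentration} (with $\delta=\delta_n$) gives $\pr(\nu_n\ge\delta_n n)\le e^{-C\delta_n/\varepsilon_n}=n^{-KC^2}=o(1/h(L_n))$, everything reduces to the lower bound
\begin{equation*}
\pr(\widehat T_g>\nu_n,\ \nu_n<\delta_n n)\ \ge\ c_0/h(L_n)\qquad\text{for some }c_0>0\text{ and all large }n.
\end{equation*}

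On the event in this probability the barrier the walk must stay above is $g(k)$ for $k\le\nu_n<\delta_n n$, hence at most $g(\delta_n n)=o(c(\delta_n n/\log n))=o(L_n)$, while the walk is required to climb up to $L_n$. I would prove the bound as a two-sided exit estimate: for the walk killed on leaving $[0,\infty)$, optional stopping of the harmonic martingale $h(S_k){\rm 1}\{T_0>k\}$ gives $\pr_0(S_k\ge L_n\text{ for some }k<T_0)\asymp 1/h(L_n)$, the overshoot at the crossing of $L_n$ being controlled by the integrability $\e[h(X);X>0]=h(0)<\infty$; replacing the barrier $0$ by the slowly increasing barrier $g=o(L_n)$ and inserting the time constraint $\nu_n<\delta_n n$ (harmless by Lemma~\ref{lem.concentration}, as above) costs only a bounded factor, checked by the same scale-$c(\cdot/\log\cdot)$ bookkeeping as in Proposition~\ref{V_exists}. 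Combining the displays, $\e Y_n\ge c_0(1+o(1))>0$ for all large $n$, i.e. $\widehat V(g)>0$.

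The hard part will be precisely this last lower bound: one has to show that constraining the walk to stay above the increasing boundary $g$ (rather than above $0$) while it climbs to level $L_n$ costs only a bounded factor, and that the logarithmic correction built into $\varepsilon_n$ (through Lemma~\ref{lem.concentration}) keeps all time horizons mutually compatible. This is the same interplay among $g$, the norming function $c$, and the $\log$-factor in \eqref{g-condition2} that drives Theorem~\ref{T2}, only now used to produce a lower rather than an upper bound; the overshoot control in the two-sided exit estimate is the one genuinely new ingredient.
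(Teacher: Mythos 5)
Your first paragraph is fine and matches the paper: the supermartingale property of $Y_n=h(S_n-g(n)){\rm 1}\{\widehat T_g>n\}$ gives existence and finiteness of the limit, and your treatment of the boundary case $g(n)\le S_n\le g(n+1)$ (using $h(y+X){\rm 1}\{y+X>0\}\le h(X){\rm 1}\{X>0\}$ for $y\le 0$ and $h(0)=1\le h(S_n-g(n))$) is a correct way to make the harmonicity step airtight. So everything hinges on positivity, and there your argument has a genuine gap. You reduce positivity to the bound $\pr(\widehat T_g>\nu_n,\ \nu_n<\delta_n n)\ge c_0/h(L_n)$, and then assert that this follows from a two-sided exit estimate for the walk above the barrier $0$, because ``replacing the barrier $0$ by $g=o(L_n)$ costs only a bounded factor.'' But that replacement is precisely the content of the proposition (equivalently, of Theorem~\ref{T3}): you are asked to show that forcing the walk to stay above the \emph{moving} boundary $g$ while it climbs to level $L_n$ does not destroy the $1/h(L_n)$ order, and no mechanism for this is offered. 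The ``bookkeeping of Proposition~\ref{V_exists}'' cannot be invoked: that proposition is an upper-bound recursion for $T_g$ (losses enter as factors $1+h(g(n_j))/h(c(\varepsilon_{n_j}n_j))$ multiplying an expectation you want to bound \emph{above}), and it does not produce a lower bound on the probability of climbing to $L_n$ before violating $g$. Note also that the paper explicitly remarks that for $\alpha<2$ no integral test \`a la \eqref{hkk} for the $h$-transformed walk staying above $g$ is available, which is the natural tool your one-scale reduction would need. In addition, even the barrier-$0$ estimate $\pr(\sigma_{L_n}<T_0)\ge c/h(L_n)$ requires an overshoot bound $\e[h(S_{\sigma_{L_n}});\sigma_{L_n}<T_0]\le C\,h(L_n)\pr(\sigma_{L_n}<T_0)$, which is not immediate for heavy-tailed steps (the overshoot over $L_n$ can be of larger order than $L_n$); you flag it as ``the genuinely new ingredient'' but do not supply it.

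The paper avoids both difficulties by never working at a single scale: it proves positivity by iterating over the geometric times $n_m=n_0[(1/\delta)^m]$. At each step, conditioning at $\nu_{n_m}$ on $\{\nu_{n_m}\le n_{m-1}\}$, harmonicity and subadditivity of $h$ give
\begin{align*}
\e[h(S_{n_m}-g(n_m));\widehat T_g>n_m]
&\ge \Bigl(1-\tfrac{h(g(n_m))}{h(c(\varepsilon_{n_m}n_m))}\Bigr)\,
\e[h(S_{n_{m-1}}-g(n_{m-1}));\widehat T_g>n_{m-1}]\\
&\quad-\,h(c(\varepsilon_{n_m}n_m))/n_m^3,
\end{align*}
the error term coming from Lemma~\ref{lem.concentration}. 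Condition \eqref{g-condition2} makes $\sum_m h(g(n_m))/h(c(\varepsilon_{n_m}n_m))$ converge, so the infinite product of the loss factors is bounded away from $0$, while the additive errors are summable; starting from a fixed $n_0$ with $\e[h(S_{n_0}-g(n_0));\widehat T_g>n_0]>0$ this yields a uniform positive lower bound, with no exit-probability asymptotics or overshoot control needed. If you want to salvage your single-scale plan, you would in effect have to reprove this multi-scale estimate (or an equivalent statement for the walk conditioned to stay positive), so the missing lower bound is not a technical afterthought but the heart of the proof.
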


\begin{proof}

Since $h(S(n)-g(n)) {\rm 1}\{\widehat  T_g>n\} $ is a supermartingale it is
sufficient to show that 
$$
\inf_n \e \left[h(S(n)-g(n));\widehat T_g>n\right]>0.
$$

Fix some $n_0>1$ and define
$$
n_m=n_0[(1/\delta)^m],\quad m\geq1,
$$
where $[r]$ denotes the integer part of $r$. Since 
$\mathbf{E}[h(S_n-g(n));\widehat T_g>n]$ is decreasing,
it suffices to show that
$\inf_m \mathbf{E}[h(S_{n_m}-g(n_m));\widehat T_g>n_m]>0$.
First 
\begin{align*}
&\e [h(S_{n_m}-g(n_m));\widehat T_g>n_m]\ge E_{1}\\
&\hspace{1cm}:=\e \left[h(S_{n_m}-g(n_m));\widehat T_g>n_m,\nu_{n_m}\leq n_{m-1}\right].
\end{align*}
For fixed $n$ let 
$$ 
\widetilde \tau (g(n)):=\min \{k\ge 1: S_k-g(n) \le 0\}.
$$
Then 
\begin{align*}
  E_1 =  \e\left[ \e\left[h(S_{n_m}-g(n_m))  {\rm 1}\{\widehat T_g>n_m\} \mid \mathcal
    F_{\nu_{n_m}}\right], \nu_{n_m}\le n_{m-1}\right].
\end{align*}
Then,  again, using the harmonicity of $h$, 
\begin{align*}
&\e\left[h(S_{n_m}-g(n_m)){\rm 1}\{\widehat  T_g>n_m\} \mid \mathcal F_{\nu_{n_m}}\right]\\
&\hspace{1cm}\ge \e_{S_{\nu_{n_m}}} [h(S_{n_m-\nu_{n_m}}-g(n_m))
{\rm 1}\{\widetilde\tau(g(n_m))>n_m-\nu_{n_m}\}]{\rm 1}\{\widehat T_g>\nu_{n_m}\}\\
&\hspace{1cm}\ge h(S_{\nu_{n_m}}-g(n_m)){\rm 1}\{\widehat T_g>\nu_{n_m}\}\\
&\hspace{1cm}\ge \left(h(S_{\nu_{n_m}}-g(\nu_{n_m}))
  -h(g(n_m))\right){\rm 1}\{\widehat  T_g>\nu_{n_m}\},
\end{align*}
where we used the subadditivity of $h$ in the latter inequality.  
Now since $S_{\nu_n}\ge c(\varepsilon_n n)$ we have, 
\begin{align*}
  &\e\left[h(S_{n_m}-g(n_m))  {\rm 1}\{\widehat  T_g>n_m\} \mid \mathcal
    F_{\nu_{n_m}}\right]\\
 &\hspace{1cm}\ge h(S_{\nu_{n_m}}-g(\nu_{n_m})) \left(
    1-\frac{h(g(n_m))}{h(c(\varepsilon_{n_m} n_m))}\right){\rm
    1}\{\widehat  T_g>\nu_{n_m}\}.
\end{align*}
Hence, using the latter inequality and using 
the submartingale property we have, 
\begin{align*}
  E_1 &\ge \left(
    1-\frac{h(g(n_m))}{h(c(\varepsilon_{n_m} n_m))}  \right) 
  \e [h(S_{\nu_{n_m}}-g(\nu_{n_m}));\widehat
  T_g>\nu_{n_m},\nu_{n_m}\le n_{m-1}]\\
&\ge 
\left(
    1-\frac{h(g(n_m))}{h(c(\varepsilon_{n_m} n_m))}  \right) 
 \e [h(S_{n_{m-1}}-g(n_{m-1}));\widehat  T_g> n_{m-1},\nu_{n_m}\le
 n_{m-1}]\\
&\ge 
\left(
    1-\frac{h(g(n_m))}{h(c(\varepsilon_{n_m} n_m))}  \right) 
 \e [h(S_{n_{m-1}}-g(n_{m-1}));\widehat  T_g> n_{m-1}] \\
&\hspace{0.5cm} -
\e [h(S_{n_{m-1}}-g(n_{m-1}));\widehat  T_g> n_{m-1},\nu_{n_m}>
 n_{m-1}].
\end{align*}
Now note 
\begin{align*}
E_2&:=\e [h(S_{n_{m-1}})-g(n_{m-1})));\widehat  T_g> n_{m-1},\nu_{n_m}>n_{m-1} ]\\
&\le  \e\left[h(c(\varepsilon_{n_m} n_m));\nu_{n_m}> n_{m-1}\right]\\
&\le \left(h(c(\varepsilon_{n_m} n_m))\right)\pr(\nu_{n_m}> n_{m-1}).
\end{align*}
Applying Lemma \ref{lem1} with $\varepsilon_n:=(3\delta C \log n)^{-1}$, we obtain
\begin{align*}
E_2\leq (h(c(\varepsilon_{n_m} n_m))n_m^{-3}.
\end{align*}

As a result we have 
\begin{multline*}
\e [h(S_{n_m}-g(n_m));\widehat T_g>n_m]\\
\ge \left(1-\frac{h(g(n_m))}{h(c(\varepsilon_{n_m} n_m))}\right) 
 \e [h(S_{n_{m-1}}-g(n_{m-1}));\widehat T_g> n_{m-1}]\\
-\left(h(c(\varepsilon_{n_{m}} n_{m}))\right) / n_m^3.
\end{multline*}
Iterating this procedure $m$ times we obtain 
\begin{align*}
  &\e [h(S_{n_m}-g(n_m));\widehat T_g>n_m]  \ge 
\prod_{j=1}^m \left(
    1-\frac{h(g(n_j))}{h(c(\varepsilon_{n_j} n_j))}  \right) \\
&\hspace{0.5cm}\times
\left( \e [h(S_{n_0}+g(n_0));\widehat T_g>n_0]  - \sum_{j=1}^m\left(h(c(\varepsilon_{n_j} n_j))\right) / n_j^3\right).
\end{align*}
Now note \eqref{g-condition2} implies that
$$
\sum_{j=1}^\infty \frac{h(g(n_j))}{h(c(\varepsilon_{n_j} n_j))} <\infty. 
$$ 
Therefore, the product
$$
\prod_{j=1}^\infty \left(1-\frac{h(g(n_j))}{h(c(\varepsilon_{n_j} n_j))}  \right) 
$$
is finite. Furthermore, recalling that the index of $h$ is $\alpha(1-\rho)$, we conclude that
$$
\sum_{j=0}^\infty \frac{(h(c(\varepsilon_{n_j} n_j))}{n_j^3}<\infty.
$$

Then the statement of the proposition immediately follows.

\end{proof}

\subsection{Derivation of the asymptotics}

In this case the upper bound is obvious 
\begin{equation}
\label{hat.UpperBound}
\mathbf{P}(\widehat T_g>n)\le \mathbf{P}(T_0>n).
\end{equation}

According to \eqref{g-condition2},
$$
\int_x^{2x}\frac{h(g(y))}{yh(c(y/\log y))}dy\to0\quad\text{as }x\to\infty.
$$
Since functions $c(x)$ $h(x)$, $g(x)$ are increasing and $h$, $c$ are
in addition regularly varying, we conclude that
$$
\frac{h(g(x))}{h(c(x/\log x)}\to0.
$$
In other words,
$$
g(x)=o(c(x/\log x)).
$$
Consequently, there exists a sequence $\delta_n\to0$ such that
$$
g(n)=o(c(\delta_n n/\log n)).
$$
Moreover, we may asume that $\delta_n$ is such that slowly  varying
functions in $\pr(\tau>n)\sim n^{\rho-1}L(n)$ and 
$h(x)=x^{\alpha(1-\rho)}l(x)$ are such that $l(\delta_n n)\sim l(n)$ 
and $L(\delta_n n )\sim L(n)$.

Applying Lemma \ref{lem.concentration} with $\delta=\delta_n$ and
$\varepsilon_n=\delta_n/(2C\log n)$, we get
\begin{equation}
\label{hat.1.split}
\pr(\widehat T_g>n) = \pr(\widehat T_g>n,\nu_n <n\delta_n) +o(n^{-2}). 
\end{equation}

Let $A_n\uparrow \infty$ be such that $A_nc(\delta_n n/\log n)=o(c(n)).$ 
Set, for brevity, $r_n:=c(\delta_n n/\log n)$. We split the probability
$\pr(\widehat T_g>n,\nu_n\leq \delta_n n)$ into two parts:
\begin{align}
\label{hat.split}
\nonumber
  \pr(\widehat T_g>n,\nu_n\leq \delta_n n) 
= &\pr\left(\widehat T_g>n,\nu_n <\delta_n n, S_{\nu_n}\in (r_n, A_n r_n)\right) \\
&+\pr(\widehat T_g>n,\nu_n <\delta_n n, S_{\nu_n}> A_n r_n).
\end{align}
For the first term we are gong to apply \eqref{lem2.eq1}.  We have, 
\begin{align}
\label{hat.up.bound}
\nonumber
&\pr\left(\widehat T_g>n,\nu_n <\delta_n n, S_{\nu_n}\in (r_n, A_nr_n)\right)\\
\nonumber
&\hspace{1cm}\le \int_{r_n}^{A_n r_n} \pr (\widehat T_g>\nu_n, S_{\nu_n}
\in dy,\nu_n\le n\delta_n )\pr(\widetilde \tau_{y}>n-n\delta_n)\\
\nonumber
&\hspace{1cm}\sim\int_{r_n}^{A_n r_n}\pr(\widehat T_g>\nu_n, S_{\nu_n}\in dy,\nu_n\le n\delta_n )
h(y)\pr\{T_0>n\}\\
\nonumber
&\hspace{1cm}\le \e [h(S_{\nu_n});\widehat T_g>\nu_n] \pr\{T_0>n\}\\
&\hspace{1cm}\sim \widehat{V}(g)\pr\{T_0>n\}.
\end{align}
The bound
from below can be obtained by similar arguments:
\begin{align}
\label{hat.l.bound}
\nonumber
&\pr\left(\widehat T_g>n,\nu_n <\delta_n n, S_{\nu_n}\in (r_n, A_nr_n)\right)\\
\nonumber
&\ge \int_{r_n}^{A_n r_n} \pr (\widehat T_g>\nu_n, S_{\nu_n}
\in dy,\nu_n\le n\delta_n )\pr(\widetilde \tau_{y-g(n)}>n-n\delta_n)\\
\nonumber
&\sim\int_{r_n}^{A_n r_n}\pr(\widehat T_g>\nu_n, S_{\nu_n}\in dy,\nu_n\le n\delta_n )
h(y)\pr(T_0>n)\\
&= \e [h(S_{\nu_n});\widehat T_g>\nu_n, S_{\nu_n}<A_nr_n, \nu_n\le n\delta_n] \pr(T_0>n).
\end{align}

Now we turn to the second term in \eqref{hat.split}.
Using the Markov property and \eqref{lem2.eq1}, we obtain
\begin{multline*}
  \pr(\widehat T_g>n,\nu_n <\delta_n n, S_{\nu_n}> A_n r_n,\widehat T_g>\nu_n) \\
\le C \e [h(S_{\nu_n});S_{\nu_n}> A_n r_n,\nu_n <\delta_n n,\widehat T_g>\nu_n] \pr(T_0>n).
\end{multline*}
So we are left to prove that 
\begin{equation}
\label{hat.tail.expectation}
\e [h(S_{\nu_n});S_{\nu_n}> A_n r_n,\nu_n <\delta_n n,\widehat T_g>\nu_n] \to 0. 
\end{equation}
But this immediately follows from \eqref{tail.expectation} since 
$\widehat T_g\le  T_g$.  

Noting that \eqref{hat.tail.expectation} yields 
$$
\pr(\widehat T_g>n,\nu_n <\delta_n n, S_{\nu_n}> A_n r_n,\widehat T_g>\nu_n)
=o(\pr(T_0>n))
$$
and taking into account \eqref{hat.up.bound}, we get
\begin{equation}
\label{hat.up.bound2}
 \limsup_{n\to\infty}\frac{\mathbf{P}(\widehat T_g>n)}{\mathbf{P}(T_0>n)}\leq \widehat{V}(g).
\end{equation}

Combining \eqref{hat.l.bound} and \eqref{hat.tail.expectation}, we have
\begin{equation}
\label{hat.l.bound2}
 \liminf_{n\to\infty}\frac{\mathbf{P}(\widehat T_g>n)}{\mathbf{P}(T_0>n)}\geq 
\liminf_{n\to\infty}\e [h(S_{\nu_n});\widehat T_g>\nu_n,\nu_n\le n\delta_n].
\end{equation}

It follows from Lemma \ref{lem.concentration} with $\delta=\delta_n$ and
$\varepsilon_n=\delta_n/(2C\log n)$ that
$$
\e[h(S_{\delta_n n}-g(\delta_n n));\widehat T_g>\delta_n n,\nu_n> n\delta_n]\leq 
g(c(n))\mathbf{P}(\nu_n> n\delta_n)\to 0.
$$
Consequently,
$$
\e [h(S_{\nu_n});\widehat T_g>\nu_n,\nu_n\le n\delta_n]=
\e [h(S_{\theta_n}-g(\theta_n));\widehat T_g>\theta_n]+o(1),
$$
where $\theta_n:=\nu_n\wedge \delta_nn$.
Applying the optional stopping theorem to the supermartingale
$h(S_n-g(n)){\rm 1}\{\widehat T_g>n\}$, we get,
$$
\e [h(S_{\theta_n}-g(\theta_n));\widehat T_g>\theta_n]\geq \widehat V(g).
$$ 
Combining this with \eqref{hat.up.bound2}, we complete the proof of Theorem \ref{T3}.

\end{document}